\newtheorem{theorem}{Theorem}[section] 
\newtheorem{lemma}[theorem]{Lemma}     
\newtheorem{corollary}[theorem]{Corollary}
\newtheorem{proposition}[theorem]{Proposition}
\newtheorem{definition}[theorem]{Definition}
\newtheorem{example}[theorem]{Example}
\newcommand{\ep}{\varepsilon}
\newcommand{\N}{\mathbb{N}}
\newcommand{\Z}{\mathbb{Z}}
\newcommand{\C}{\mathbb{C}}
\newcommand{\K}{\mathbb{K}}
\newcommand{\Aut}[1]{{\rm Aut}(#1)}
\newcommand{\id}[1]{{\rm id}_{#1}}
\newcommand{\QH}{{\textrm{(QH)}}}
\newcommand{\wA}{\widetilde{A}}
\begin{document}
\title[Deformations of Wreath Products]{Deformations of Wreath Products} 

\author{Marius Dadarlat}
\address{MD: Department of Mathematics, Purdue University, West Lafayette, IN 47907, USA}
\email{mdd@purdue.edu}

\author{Ulrich Pennig}
\address{UP: School of Mathematics, Cardiff University, Senghennydd Road, Cardiff CF24 4AG, UK}
\email{PennigU@cardiff.ac.uk}	

\author{Andrew Schneider}
\address{AS: Department of Mathematics, Purdue University, West Lafayette, IN 47907, USA}
\email{schneid5@purdue.edu}

\thanks{M.D. was partially supported by NSF grant \#DMS--1362824}
\thanks{U.P. was partially supported by the SFB 878 -- ``Groups, Geometry \& Actions'' while employed at the University of M\"unster}
\subjclass[2010]{46L80, 19K99}

\begin{abstract}
 Connectivity is a homotopy invariant property of a separable $C^*$-algebra $A$ which has three important consequences: absence of nontrivial projections, quasidiagonality and realization of the Kasparov group $KK(A,B)$ as homotopy classes of asymptotic morphisms from $A$ to $B\otimes \K$ if  $A$ is nuclear.
Here we give a new characterization of connectivity for separable exact C*-algebras  and use this characterization to show that the class of discrete countable amenable groups whose augmentation ideals are connective is closed under generalized wreath products. In a related circle of ideas, we give a result on  quasidiagonality of reduced crossed-product C*-algebras associated to noncommutative Bernoulli actions.
\end{abstract}

\maketitle

\section{Introduction}
Voiculescu \cite{Voi:unitaries} has shown that the K-theory of the two-torus $\mathbb{T}^2$ can be captured from sequences of pairs of  almost commuting unitaries $u_n,v_n \in U(n)$ with $\lim_{n\to \infty}\|u_nv_n-v_nu_n\|=0$ or equivalently from completely positive and contractive (cpc) discrete asymptotic morphisms $\{\varphi_n:C(\mathbb{T}^2)\cong C^*(\Z^2)\to M_n(\C)\}_n$.
Let us recall that a cpc discrete asymptotic morphism is a sequence of completely positive contractive maps $\varphi_n \colon A \to B_n$, which is almost multiplicative in the sense that $\lim_{n \to \infty}\lVert \varphi_n(a)\varphi_n(b) - \varphi_n(ab) \rVert = 0$ for all $a,b\in A$. Voiculescu's example is not an isolated phenomenon. Indeed, Connes and Higson \cite{Con-Hig:etheory} showed that the concept of asymptotic morphism plays a fundamental role in the algebraic topology of $C^*$-algebras. The homotopy classes of asymptotic morphisms
from the suspension of $A$ to the stable suspension of $B$ is isomorphic to the E-theory group $E(A,B)\cong  [[SA,SB\otimes \K]]$,  the universal half-exact $C^*$-stable  homotopy bifunctor on separable $C^*$-algebras.
 Building on these ideas, Houghton-Larsen and Thomsen \cite{Thomsen-Larsen} have shown
 that the Kasparov groups can be realized as homotopy classes of cpc asymptotic morphisms,  $KK(A,B)\cong [[SA,SB\otimes \K]]^{cp}$. A cpc asymptotic morphism consists of a family of cpc maps $\{\varphi_t:A \to B\}_{t\in [1,\infty)}$  such that the map $t\mapsto \varphi_t(a)$ is continuous and
 $\lim_{t \to \infty}\lVert \varphi_t(a)\varphi_t(b) - \varphi_t(ab) \rVert = 0$ for all $a,b\in A$.
 The role of suspensions is two-fold as it provides both a group structure and a good supply of maps due to the quasidiagonality of $SA$.

 An important question in this context is to characterize the class of C*-algebras for which one can dispense with suspensions and realize $E(A,B)$ and $KK(A,B)$ as homotopy classes of asymptotic morphisms $[[A,B \otimes \K]]$  and respectively $[[A,B \otimes \K]]^{cp}$. Desuspension results  have played a key role in the classification theory of nuclear C*-algebras  \cite{Ror:encyclopedia}.
 Moreover, the realization of K-homology of a C*-algebra $A$ as homotopy classes of cpc deformations of $A$ into matrices $[[A, M_{\infty}(\C)]]^{cp}\cong [[A, \K]]^{cp} $  has other significant applications as illustrated
  in \cite{CGM:flat}, \cite{Dadarlat-almost}.  The pairing $K_0(A)\times K^0(A) \to \Z$ can then be described using the canonical trace on matrices rather than the Fredholm index.

A first answer  to the  question of desuspending in E-theory is given in \cite{DadLor:unsusp}: the natural map $[[A, B \otimes \K]] \to E(A,B)$ is an isomorphism for all separable $C^*$-algebras $B$ if and only if $A$ is \emph{homotopy symmetric}, which means that $[[\id{A}]] \in [[A, A \otimes \K]]$ has an additive inverse or equivalently that $[[A, A \otimes \K]]$ is a group.
Unfortunately,  it is quite hard in practice to check that a  given $C^*$-algebra is homotopy symmetric. In a recent paper \cite{Dad-Pennig-homotopy-symm},  we employed results of Thomsen \cite{Thomsen:discrete} to prove that a separable nuclear C*-algebra is homotopy symmetric if and only if $A$ is \emph{connective}, a property which is much easier to verify, see Definition~\ref{def:connectivity} below.
Our original terminology for connectivity was \emph{property} {\QH}. In this paper we give a new characterization of this property, see Prop.~\ref{new-characterization}, and that prompted us to introduce the more descriptive notion of connectivity in place of property {\QH}. A countable discrete group $G$ is called connective if  the kernel $I(G) \subset C^*(G)$  of the trivial representation $\iota:C^*(G)\to \C$ is a connective C*-algebra.

Connectivity is a homotopy invariant property and it has the important feature that it passes to  $C^*$-subalgebras. This has allowed us to exhibit vast new  classes of homotopy symmetric $C^*$-algebras \cite{Dad-Pennig-homotopy-symm}.
 Connectivity  has two other important consequences: absence of nontrivial projections and quasidiagonality.
With this in mind, the task of establishing  connectivity for large classes of (amenable) group C*-algebras becomes particularly interesting, since   this stronger property  would give a new explanation of   why the conjectures of Kadison-Kaplansky and  Rosenberg
are true for amenable groups as proved by Higson and Kasparov \cite{HigKas:BC} and respectively by Tikuisis, White and Winter \cite{TWW:quasidiagonality}.

  It is implicitly conjectured in \cite{Dadarlat-almost} that the augmentation ideal $I(G)$ of  a discrete, torsion free,  amenable group $G$  is homotopy symmetric and hence that the Kasparov group $KK(I(G),B)$ can be realized as the homotopy classes of asymptotic morphisms $[[I(G),B \otimes \K]]$ for any separable $C^*$-algebra $B$. The case of abelian groups is covered by  results from \cite{DadLor:unsusp}.
The conjecture has been verified for nilpotent groups in \cite{Dad-Pennig-homotopy-symm} using the equivalence between homotopy-symmetry and connectivity. It was also shown there that the class of discrete countable connective amenable groups is closed under torsion free central extensions and under direct limits.
The main result of this paper is Theorem~\ref{thm:wreath-product} which shows that the class of discrete countable amenable connective groups is closed under generalised wreath products. Since connectivity passes to subgroups, this class contains a lot of new examples of connective groups, including the free solvable ones.
Moreover, Corollary~\ref{corollary:periodic} shows that semidirect products of amenable discrete connective groups with respect to periodic actions are connective. The arguments from the proof of Theorem~\ref{thm:wreath-product} lead naturally to the question of quasidiagonality of crossed products of the type
$(\bigotimes_G D)\rtimes_r G$ for $D$ a separable unital C*-algebra and $G$ a discrete countable group. Taking advantage of the breakthrough results from \cite{ORS:qd_elem_amen} and \cite{TWW:quasidiagonality} we show in Theorem ~\ref{thm:Bernoulli}  that $(\bigotimes_G D)\rtimes_r G$ is quasidiagonal if and only if $D$ is quasidiagonal and $G$ is amenable.

\section{Preliminaries}
We will use the notation from \cite[Sec.\ 5]{Dad-IJM}. For a Hilbert space $\mathcal{H}$,
we denote by $L(\mathcal{H})$ the C*-algebra
of bounded and linear operators on $\mathcal{H}$. The ideal of compact operators is denoted by $\K$.
If $A$ is a $C^*$-algebra,
$\mathcal{H}$, $\mathcal{H}'$ are  Hilbert spaces, $F \subset A$ is a finite set, $\ep > 0$ and
$\varphi \colon A \to L(\mathcal{H})$ and $\psi \colon A \to L(\mathcal{H}')$ are two maps,
we write $\varphi \prec_{F,\ep} \psi$ if there is an isometry $v :\mathcal{H}\to \mathcal{H}'$
such that $\lVert \varphi(a) - v^*\psi(a)v \rVert < \ep$ for all $a \in F$.
If $v$
can be chosen to be a unitary, we write $\varphi \sim_{F,\ep} \psi$. Moreover, we
write $\varphi \prec \psi$ if $\varphi \prec_{F,\ep} \psi$ for all finite sets
$F$ and for all $\ep > 0$.
Most maps that we use in this paper are either unital and completely positive (abbreviated ucp) or
completely positive and contractive (cpc).
 If $\{\varphi_n \colon A \to L(\mathcal{H}_n)\}_n$
and $\{\varphi'_n \colon A \to L(\mathcal{H}'_n)\}_n$ are two sequences of maps, we write
$(\varphi_n)\sim (\varphi'_n)$ if there is a sequence of unitaries $u_n:\mathcal{H}_n \to \mathcal{H}_n'$
such that $\lim_{n\to \infty}\|\varphi_n(a)-u_n^*\varphi'(a)u_n\|=0$ for all $a\in A$.
A ucp (or cpc) asymptotic morphism is a sequence $\{\varphi_n \colon A \to B_n\}_n$ of ucp (respectively cpc) maps which are asymptotically multiplicative in the sense that $\lim_{n\to \infty}\|\varphi_n(ab)-\varphi_n(a)\varphi_n(b)\|=0$ for all $a,b \in A$.

 Let us recall from \cite{Dad-Pennig-homotopy-symm} that a separable $C^*$-algebra $A$ has property {\QH} if there is a discrete cpc asymptotic homomorphism $\{\gamma_n:A \to L(\mathcal{H}_n)\}_n$ with $\dim(\mathcal{H}_n)=k_n \nearrow \infty$ ,
which is injective and null-homotopic. This means that $\limsup_{n}\|\gamma_n(a)\|=\|a\|$ for all $a\in A$ and that there is a discrete cpc asymptotic homomorphism
$\{\varphi_n:A \to C_0[0,1)\otimes L(\mathcal{H}_n)\}_n$, $\varphi_n=(\varphi_n^{(t)})_{t\in [0,1]}$, such that
$\varphi_n^{(0)}=\gamma_n$ for all $n\in \N$.
It is shown in \cite[Prop.2.5]{Dad-Pennig-homotopy-symm} that in the definition of property {\QH} restated above, one can replace all the spaces $\mathcal{H}_n$ by the same separable infinite dimensional Hilbert space $\mathcal{H}$. Let us denote   $C_0[0,1)\otimes L(\mathcal{H})$ by $CL(\mathcal{H})$.
The following definition will change terminology from property {\QH} to connectivity. This is motivated by Proposition~\ref{new-characterization}.
\begin{definition}\label{def:connectivity}
 A separable C*-algebra $A$ is \emph{connective} if
there is a  $*$-monomor\-phism $\Phi:A\to\prod_n CL(\mathcal{H})/\bigoplus_n CL(\mathcal{H})$ which is liftable to a cpc map $\varphi: A\to\prod_n CL(\mathcal{H})$.
A discrete countable group $G$ is \emph{connective} if its augmentation ideal $I(G)=\mathrm{ker}(\iota:C^*(G)\to \C)$ is connective.
\end{definition}
By \cite[Prop.2.5]{Dad-Pennig-homotopy-symm}, $A$ has property {\QH} if and only if $A$ is connective.
A connective C*-algebra is quasidiagonal and $p=0$ is the only idempotent element of $A\otimes \K$ and in fact of any minimal tensor product $A\otimes B$.
Indeed, it is straightforward to check that these properties are inherited from $CL(\mathcal{H})$.

 Proposition~\ref{new-characterization} below gives a new equivalent definition of connectivity, in the case of exact $C^*$-algebras. Specifically, it shows that all the components of the injective and null-homotopic discrete cpc asymptotic homomorphism  $\{\gamma_n\}_n$ can be chosen to be  equal to any given $*$-representation of $A$ on $\mathcal{H}$ which is essential, i.e.~$\pi^{-1}(K(\mathcal{H}))=\{0\}$.
For the proof we need  Lemma~5.3 of \cite{Dad-IJM} reproduced below.
\begin{lemma}[\cite{Dad-IJM} ]\label{lem:discrete_inverse}
Let $B$  be a separable unital $C^*$-algebra.
Let  $\{\varphi_n \colon B \to M_{k(n)}(\C)\}_n$
and $\{\gamma_n \colon B \to M_{r(n)}(\C)\}_n$ be ucp discrete asymptotic morphisms. Suppose that
 $\limsup_{n}\|\gamma_n(b)\|=\|b\|$ for all $b\in B$. Then there exist a sequence $(\omega(n))$ of disjoint
finite subsets of $\N$ with $\max \omega(n-1) < \min \omega(n)$ and a ucp discrete asymptotic
morphism  $\{\varphi^{\,\prime}_n \colon B \to M_{s(n)}(\C)\}_n$ such that
$\left(\varphi_n \oplus \varphi^{\,\prime}_n\right) \sim (\gamma_{\omega(n)})$, where
$\gamma_{\omega(n)} = \oplus_{i \in \omega(n)} \gamma_i$.
\end{lemma}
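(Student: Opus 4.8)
The plan is to prove this as a matricial (asymptotic) version of Voiculescu's noncommutative Weyl--von Neumann theorem. The hypothesis $\limsup_n\norm{\gamma_n(b)}=\norm{b}$ says that $(\gamma_n)$ is a \emph{faithful} asymptotically multiplicative family, so a suitable block $\gamma_{\omega(n)}$ should behave like a faithful essential representation and absorb the given $\varphi_n$ as an approximate direct summand. Fix an increasing sequence of finite sets $F_1\subset F_2\subset\cdots$ with dense union in $B$, and $\ep_n\downarrow 0$. I claim it suffices to produce, for each $n$, a finite block $\omega(n)\subset\N$ with $\max\omega(n-1)<\min\omega(n)$ and an isometry $v_n\colon\C^{k(n)}\to\bigoplus_{i\in\omega(n)}\C^{r(i)}$ whose range is approximately $\gamma_{\omega(n)}$-invariant and carries $\varphi_n$, i.e. $\norm{\varphi_n(b)-v_n^*\gamma_{\omega(n)}(b)v_n}<\ep_n$ and $\norm{[\gamma_{\omega(n)}(b),v_nv_n^*]}<\ep_n$ for all $b\in F_n$.

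Granting this, complete $v_n$ to a unitary $u_n=[v_n\,|\,w_n]$ and set $\varphi_n'(b)=w_n^*\gamma_{\omega(n)}(b)w_n$, which is ucp and unital since $\gamma_{\omega(n)}(1)=1$. The approximate invariance bounds the off-diagonal corner $v_n^*\gamma_{\omega(n)}(b)w_n$ by $\norm{[\gamma_{\omega(n)}(b),v_nv_n^*]}<\ep_n$ on $F_n$, so $u_n^*\gamma_{\omega(n)}(b)u_n$ lies within $2\ep_n$ of $\varphi_n(b)\oplus\varphi_n'(b)$ there. As $\ep_n\to0$ and $\bigcup_nF_n$ is dense, this gives $(\varphi_n\oplus\varphi_n')\sim(\gamma_{\omega(n)})$. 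Finally $(\varphi_n')$ is a ucp asymptotic morphism: $(\gamma_{\omega(n)})$ is asymptotically multiplicative (a block sum of the far-out, increasingly multiplicative $\gamma_i$), hence so is its unitary conjugate $\varphi_n\oplus\varphi_n'$, and reading off the $w_n$-corner of the approximately block-diagonal product forces $\varphi_n'(ab)-\varphi_n'(a)\varphi_n'(b)\to0$.

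It remains to build $\omega(n)$ and $v_n$. For the normalisation, note that $\norm{\gamma_\omega(b)}=\max_{i\in\omega}\norm{\gamma_i(b)}$ for a block sum, so including in $\omega(n)$, for each $b\in F_n$, one arbitrarily large index with $\norm{\gamma_i(b)}>\norm{b}-\ep_n$ (available since $\limsup_i\norm{\gamma_i(b)}=\norm{b}$), all chosen beyond $\max\omega(n-1)$ and large enough that each $\gamma_i$ is $(F_n,\ep_n)$-multiplicative, makes $\gamma_{\omega(n)}$ both $(F_n,\ep_n)$-multiplicative and $(F_n,\ep_n)$-isometric. The core is then to produce $v_n$ by a Voiculescu absorption. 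Form the map $\bar\gamma\colon B\to\prod_nM_{r(n)}(\C)/\bigoplus_nM_{r(n)}(\C)$, $b\mapsto[(\gamma_n(b))_n]$; asymptotic multiplicativity makes it a unital $*$-homomorphism and the $\limsup$ hypothesis makes it isometric, hence a faithful representation with no compact part. Voiculescu's theorem, in its absorption form for ucp maps, then shows that $\varphi_n$ is absorbed by $\bar\gamma$: there is an isometry $v$ into the sequence space with $v^*\bar\gamma(b)v\approx\varphi_n(b)$ and $vv^*$ approximately commuting with $\bar\gamma(b)$, for $b\in F_n$. Truncating $v$ to finitely many coordinates --- the discarded tail lying in the ideal $\bigoplus_nM_{r(n)}(\C)$ and so negligible on $F_n$, and the block enlarged if necessary to contain $\C^{k(n)}$ --- yields $v_n$ valued in a single finite block $\bigoplus_{i\in\omega(n)}\C^{r(i)}$ with $\min\omega(n)$ as large as we please.

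The main obstacle is this truncation. Voiculescu's theorem, the clean tool, requires a genuine faithful essential representation, and only the \emph{absorption} form (not mere compression) supplies the almost-invariant range needed to split off a true direct summand; but the representation we are forced to use is $\bar\gamma$, whose quasidiagonal, finite-block structure is exactly what permits passing from an abstract compression to an honest matrix block $\gamma_{\omega(n)}$ --- a generic faithful essential representation of $B$ is not quasidiagonal, so finite-dimensionality cannot be imported from outside and must be read off the $\gamma_i$ themselves. Care is therefore needed to control the truncation error uniformly on $F_n$ and to keep $\max\omega(n-1)<\min\omega(n)$; the latter is arranged by a diagonal argument, choosing the blocks inductively so that $\min\omega(n)$ exceeds every index used so far. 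Once the truncation is justified, the construction closes as in the first two paragraphs.
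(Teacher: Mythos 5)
Your overall architecture is the right one: reduce the lemma to producing, for each $n$, a far-out finite block $\omega(n)$ and an isometry $v_n$ with $\norm{\varphi_n(b)-v_n^*\gamma_{\omega(n)}(b)v_n}$ and $\norm{[\gamma_{\omega(n)}(b),v_nv_n^*]}$ small on $F_n$; and your assembly (completing $v_n$ to a unitary, defining $\varphi'_n$ as the complementary corner, bounding the off-diagonal corner by the commutator, reading off asymptotic multiplicativity of $\varphi_n'$ blockwise) is correct. For calibration: the paper does not prove this lemma at all --- it imports it verbatim from \cite[Lemma~5.3]{Dad-IJM} --- so the entire burden of a blind proof is the core step producing $v_n$, and that is exactly where your proposal has a genuine gap.

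The gap: there is no ``absorption form of Voiculescu's theorem for ucp maps'' that delivers your $v$, and the statement you invoke is false for general ucp maps. Indeed, if $v^*\gamma(b)v\approx\varphi_n(b)$ and $[vv^*,\gamma(b)]\approx 0$ with $\gamma$ almost multiplicative, then $\varphi_n(a)\varphi_n(b)=v^*\gamma(a)vv^*\gamma(b)v\approx v^*\gamma(ab)v\approx\varphi_n(ab)$; so such a $v$ can exist only because $\varphi_n$ is itself almost multiplicative --- yet your construction of $v$ never uses the asymptotic multiplicativity of $(\varphi_n)$, a sure sign the key point has been black-boxed. A correct argument needs three ingredients that are missing or misstated: (i) Stinespring-dilate $\varphi_n$ to a representation $\pi_n$ and use almost multiplicativity via the identity $\norm{(1-P)\pi_n(b)P}^2=\norm{\varphi_n(b^*b)-\varphi_n(b^*)\varphi_n(b)}$ to see that the Stinespring corner $P$ is almost invariant; (ii) apply Voiculescu's absorption theorem to the genuine extension $q\circ\bigl(\bigoplus_i\gamma_i\bigr)\colon B\to L\bigl(\bigoplus_i\C^{r(i)}\bigr)/\K$, which yields invariance of the absorbed summand only \emph{modulo compacts}; (iii) convert those mod-compact errors into norm errors by placing the copy of $\C^{k(n)}$ far out, e.g.\ absorbing $(\pi_n\oplus\rho)^\infty$ and using that $\norm{KQ_m}\to 0$ for $K$ compact and $Q_m$ mutually orthogonal finite-rank projections. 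Your one sentence addressing this last point --- that the discarded tail lies ``in the ideal $\bigoplus_n M_{r(n)}(\C)$ and so is negligible'' --- is backwards: membership in the ideal is no help on a finite-dimensional corner (a rank-one projection lies in the ideal and compresses to norm $1$); smallness comes only from pushing the corner toward infinity, which is an argument, not a triviality. Once (i)--(iii) are supplied, your truncation to a single block $\omega(n)$ with $\min\omega(n)>\max\omega(n-1)$ does work --- here the block-diagonality of $\bigoplus_i\gamma_i$ helps exactly as you say --- and the rest of your proof closes as written.
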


For a $C^*$-algebra $A$ we denote by $\widetilde{A}$ its unitalization.
Let  $\iota:\wA \to \C$ be the corresponding character.
Let $\mathcal{H}$ be a separable Hilbert space. The map obtained by composing $\iota$ with the
unital homomorphism $\C \to L(\mathcal{H})$ will be denoted $\iota\cdot \mathrm{id}_{\mathcal{H}}$ or by  $\iota^\infty$ if $\mathcal{H}$ is infinite dimensional. {Let $\gamma \colon \widetilde{A} \to L(\mathcal{H})$ be a ucp map. We will use the notation $\gamma^{\infty}$ for the infinite sum $\gamma \oplus \gamma \oplus \dots$.

\begin{proposition}\label{new-characterization}
Let $A$ be a separable exact $C^*$-algebra. Then $A$ is connective if and only if
 for any  essential unital representation $\pi:\wA \to L(\mathcal{H})$ of $\wA$ on a separable infinite dimensional  Hilbert space, any finite subset $F\subset \wA$ and any $\ep>0$ there is an $(F,\ep)$-multiplicative  ucp map $\varphi:\wA \to C[0,1]\otimes L(\mathcal{H})$, $\varphi=(\varphi_t)_{t\in [0,1]}$
such that $\varphi_0=\pi$ and $\varphi_1=\iota^\infty$.
\end{proposition}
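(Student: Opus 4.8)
The plan is to prove the two implications separately; the forward direction (connectivity $\Rightarrow$ the stated homotopy property) carries essentially all of the difficulty, and exactness is used only there.

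For the easy direction, assume the homotopy property holds. I would fix finite sets $F_n\nearrow$ with dense union in $\wA$ and $\ep_n\searrow 0$, and extract $(F_n,\ep_n)$-multiplicative ucp maps $\varphi^{(n)}\colon\wA\to C[0,1]\otimes L(\mathcal{H})$ with $\varphi^{(n)}_0=\pi$ and $\varphi^{(n)}_1=\iota^\infty$. Since $\iota(a)=0$ for $a\in A$, restriction to the ideal $A$ lands in $C_0[0,1)\otimes L(\mathcal{H})=CL(\mathcal{H})$, so $\{\varphi^{(n)}|_A\}_n$ is a cpc asymptotic morphism $A\to CL(\mathcal{H})$ whose value at $t=0$ is the constant representation $\gamma:=\pi|_A$. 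As $\pi$ is essential it is faithful (if $\pi(x)=0$ then $x\in\pi^{-1}(K(\mathcal{H}))=\{0\}$), so $\gamma$ is isometric, and $\{\varphi^{(n)}|_A\}_n$ is an injective null-homotopy of $\{\gamma\}_n$. By \cite[Prop.~2.5]{Dad-Pennig-homotopy-symm} this is exactly property \QH, hence $A$ is connective.

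For the hard direction, assume $A$ is connective and fix $\pi$, $F\subset\wA$, $\ep>0$. A connective algebra is non-unital (otherwise $1_A\otimes e$ would be a nonzero projection in $A\otimes\K$), so $A$ is an essential ideal of $\wA$. Unitizing the injective null-homotopic asymptotic morphism from property \QH gives a ucp asymptotic morphism $\tilde g_n\colon\wA\to L(\mathcal{H})$ with a ucp homotopy $\tilde h_n\colon\wA\to C[0,1]\otimes L(\mathcal{H})$ satisfying $\tilde h^{(0)}_n=\tilde g_n$ and $\tilde h^{(1)}_n=\iota^\infty$; since $A$ is essential in $\wA$, injectivity on $A$ upgrades to $\limsup_n\|\tilde g_n(b)\|=\|b\|$ for all $b\in\wA$. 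This is where exactness enters: I would invoke nuclear embeddability to replace $\pi$, $\tilde g_n$ and $\tilde h_n$ by ucp maps factoring through matrix algebras, keeping the endpoint $\iota^\infty$ exact and preserving the domination property, thereby obtaining ucp discrete asymptotic morphisms $\varphi_n\colon\wA\to M_{k(n)}(\C)$ that approximate $\pi$ (as a compression) and $\gamma_n\colon\wA\to M_{r(n)}(\C)$ that dominate and are null-homotopic to the matricial trivial representation $\iota^{(r(n))}$. Applying Lemma~\ref{lem:discrete_inverse} to $\varphi_n$ and $\gamma_n$ yields $\varphi'_n$ and blocks $\omega(n)$ with $\varphi_n\oplus\varphi'_n\sim\gamma_{\omega(n)}$; since each $\gamma_i$ is null-homotopic to $\iota^{(r(i))}$, the block sum $\gamma_{\omega(n)}$ is null-homotopic to a matricial trivial representation, and conjugating by the implementing unitaries transfers this to $\varphi_n\oplus\varphi'_n$. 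Fixing $n$ large, embedding into $L(\mathcal{H})$ and padding with a copy of $\iota^\infty$ produces a ucp map $\beta$ and an $(F,\ep)$-multiplicative ucp homotopy $\pi\oplus\beta\rightsquigarrow\iota^\infty$. The proof then closes with an Eilenberg swindle on $\mathcal{H}^{\oplus\infty}\cong\mathcal{H}$ through uniformly $(F,\ep)$-multiplicative ucp maps, combining a rotation homotopy ($\pi\oplus\beta\rightsquigarrow\beta\oplus\pi$), the Voiculescu absorption $\pi\rightsquigarrow\pi\oplus\iota^\infty$ (valid since $\pi$ is essential), and the reassociation
$$\pi\oplus(\beta\oplus\pi)^{\oplus\infty}=(\pi\oplus\beta)^{\oplus\infty},$$
to obtain $\pi\rightsquigarrow\pi\oplus\iota^\infty\rightsquigarrow\pi\oplus(\beta\oplus\pi)^{\oplus\infty}=(\pi\oplus\beta)^{\oplus\infty}\rightsquigarrow(\iota^\infty)^{\oplus\infty}=\iota^\infty$. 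Concatenating, reparametrizing, and bridging approximate joins by short linear homotopies (convex combinations of close approximately multiplicative ucp maps stay approximately multiplicative) gives the required path with exact endpoints.

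I expect the principal obstacle to be precisely this transfer: moving the connectivity null-homotopy, which a priori lives on the anonymous dominating morphism $\gamma_n$, onto the prescribed representation $\pi$. Concretely the delicate points are (i) carrying out the exactness/matrix reduction while \emph{simultaneously} retaining $\limsup_n\|\gamma_n\|=\|\cdot\|$, the null-homotopy to $\iota^{(r(n))}$, and the exact endpoint $\iota^\infty$; and (ii) orchestrating the swindle so that absorption deposits the path at $\iota^\infty$ rather than cycling back to $\pi$, all while keeping the multiplicativity defect below $\ep$ uniformly in the homotopy parameter and hitting $\pi$ and $\iota^\infty$ on the nose.
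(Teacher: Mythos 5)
Your ``easy'' direction is fine and matches the paper's (which simply declares it trivial), and the skeleton of your hard direction is broadly the right one: unitalize the connectivity data, reduce $\pi$ to matricial approximately multiplicative models, absorb those into the connectivity maps via Lemma~\ref{lem:discrete_inverse}, transfer the null-homotopy, and close with rotations and linear bridges. The genuine gap is at the one step that carries all the analytic content: producing ucp \emph{discrete asymptotic morphisms} $\varphi_n\colon \wA\to M_{k(n)}(\C)$ with $\pi\sim_{F,\ep}\varphi_n^{\infty}$. You propose to get these by ``invoking nuclear embeddability,'' but nuclear embeddability (= exactness) only gives cpc factorizations of $\pi$ through matrix algebras converging point-norm; the matricial legs of such factorizations need not be approximately multiplicative, and nothing makes their ampliations approximately unitarily equivalent to $\pi$. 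Quasidiagonality of $A$ (which does follow from connectivity) gives approximately multiplicative compressions and a block-diagonal approximation $\pi\approx\bigoplus_i\theta_i$ with \emph{varying} blocks, but that is still not enough: the low-index blocks have uncontrolled multiplicativity defect, and they cannot be homotoped to $\iota$ or absorbed into $\iota^\infty$, so they obstruct both your swindle and any direct transfer of the null-homotopy. What is needed is the \emph{homogenized} statement $\pi\sim_{F,\ep}\theta^\infty$ for a \emph{single} matricial approximately multiplicative ucp map $\theta$; this is exactly Theorem~6 of \cite{Dad;apqd}, a substantive theorem about separable \emph{exact} quasidiagonal $C^*$-algebras, which the paper cites at precisely this point. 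It is not a formal consequence of nuclear embeddability, so your proposal must invoke it (or reprove it) to go through. A smaller misallocation: you also want to matricialize the null-homotopy $\tilde h_n$, but that is unnecessary, since by the definition of property {\QH} the connectivity maps $\gamma_n$ already act on finite-dimensional spaces; exactness is needed only on the $\pi$ side.

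Modulo that gap, your endgame is a legitimate alternative to the paper's. The paper proves the sharper claim $\pi\sim_{F',\ep'}\gamma_{[n,m]}^\infty$ (with $\gamma_{[n,m]}=\gamma_n\oplus\dots\oplus\gamma_m$) by combining two-sided approximate domination --- $\pi^\infty\prec\gamma_{[n,m]}^\infty$ from \cite{Dad;apqd} plus Lemma~\ref{lem:discrete_inverse}, and $\gamma_{[n,m]}^\infty\prec\pi$ from Voiculescu plus Stinespring --- with the Schr\"oder--Bernstein-type Lemma~5.1 of \cite{Dadarlat:MRL}; the homotopy is then a rotation, one linear bridge, and the ampliated null-homotopy $(\Gamma^{(t)}_{[n,m]})^\infty$, with no swindle needed. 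Your route replaces the appeal to \cite{Dadarlat:MRL} by Voiculescu absorption $\pi\rightsquigarrow\pi\oplus\iota^\infty$ and the reassociation $\pi\oplus(\beta\oplus\pi)^{\oplus\infty}=(\pi\oplus\beta)^{\oplus\infty}$; this does work (in particular, conjugation by any fixed unitary fixes $\iota^\infty$ exactly, since $\iota^\infty$ is scalar-valued, so the reindexing and flip steps keep the endpoints on the nose). But note that the swindle consumes the same missing input $\pi\sim\varphi_n^\infty$, so it simplifies the conclusion only, not the actual difficulty.
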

\begin{proof} One direction is trivial and it holds for arbitrary separable C*-algebras $A$.
Suppose now that $A$ is exact and connective. By unitalizing the relevant $C^*$-algebras and maps,
we obtain a
 ucp discrete asymptotic morphism $\{\Gamma_n \colon \wA \to C[0,1]\otimes L(\mathcal{H}_n) \}_n$,
$\Gamma_n=(\Gamma^{(t)}_n)_{t\in [0,1]}$, $\mathcal{H}_n$ finite dimensional, such that $\{\Gamma^{(0)}_n \colon \wA \to L(\mathcal{H}_n) \}_n$
is injective and $\Gamma^{(1)}_n=\iota\cdot \mathrm{id}_{\mathcal{H}_n}$ for all $n\in \N$. Let $\gamma_n:=\Gamma^{(0)}_n$.

If $\alpha_n:\wA \to L(\mathcal{H}_n)$ is a sequence of maps and $m>n$ we define $\alpha_{[n,m]} =\alpha_n \oplus \alpha_{n+1}\oplus \dots\oplus \alpha_{m}$ .

\textbf{Claim}.  Fix an essential representation $\pi:\wA \to L(\mathcal{H})$. Then, for any finite subset $F'\subset \wA$, any $\ep'>0$ and any $n_0\in \N$,  there are integers $m>n>n_0$
such that  $\pi\sim_{F',\ep'} \gamma_{[n,m]} ^\infty$.

Let us observe that if we prove the claim, then we can complete the proof as follows.
Let $F$ and $\ep$ be given as in the statement of the proposition. It is straightforward to find a finite set $F'\subset \wA$ and $\ep'>0$
with the property that
 for any two $(F',\ep')$-multiplicative ucp maps $\alpha,\beta:\wA \to L(\mathcal{H})$  with $\|\alpha(a)-\beta(a)\|<\ep'$ for all $a\in F'$,
 it follows that $(1-t)\alpha+t\beta$ is $(F,\ep)$-multiplicative for all $t\in [0,1]$.
Fix $n_0$ such that $\Gamma_n$ is $(F,\ep)$-multiplicative for all $n> n_0$ {and choose $m,n$ as in the claim}.
We may identify the Hilbert space $\mathcal{H}$ with the Hilbert space corresponding to $\gamma_{[n,m]} ^\infty$.
Since $\pi\sim_{F',\ep'} \gamma_{[n,m]} ^\infty$, there is  a unitary $u\in U(\mathcal{H})$ such that $\|u\pi(a)u^*-\gamma_{[n,m]} ^\infty(a)\|<\ep'$ for all $a\in F'$.
Let $u_t$ be a {norm-}continuous path of unitaries in $U(\mathcal{H})$ from $1$ to $u$.
Define $\varphi_t$ to be the continuous path of ucp maps obtained by concatenating the paths
$\pi_t=u_t\pi u^*_t$ with $\mu_t=(1-t)u\pi u^*+t\gamma_{[n,m]}^\infty$ and with $\nu_t=(\Gamma^{(t)}_{[n,m]})^\infty$. Due to our choice of $F'$, $\ep'$ and $n_0$, the map $t\mapsto \varphi_t$ defines a continuous path of $(F,\ep)$-multiplicative ucp maps
joining $\pi$ with $\iota^\infty$.

Let us now verify the {claim} for any given data $\pi$,  $F'$,  $\ep'$ and  $n_0$.
By \cite[Lemma 5.1]{Dadarlat:MRL} applied to $\wA$, $F'\subset \wA$  and $\ep'>0$, there exist a finite subset $F''\subset\wA$ and $\ep''>0$ with the property that if $\alpha_i:\wA\to L(\mathcal{H}_i)$, $i=1,2$ are any two $(F'',\ep'')$-multiplicative ucp maps such that $\alpha_1^\infty\prec_{F'',\ep''} \alpha_2$ and
$\alpha_2^\infty\prec_{F'',\ep''} \alpha_1$ then $\alpha_1\sim_{F',\ep'}\alpha_2$.
  Let $(F_i)_i$ be an increasing sequence of finite subsets of $\wA$ with union  dense in $\wA$ and let $(\ep_i)_i$  be a sequence of strictly positive  numbers convergent $0$. Since $A$ is separable, exact and quasidiagonal, it follows from  Theorem 6 of \cite{Dad;apqd} that there is a discrete ucp asymptotic morphism
$\{\theta_i:\wA \to L(\mathcal{H}_i)\}_{i\in \N}$, with $\mathcal{H}_i$ finite dimensional Hilbert spaces,
such that $\pi\sim_{F_i,\ep_i}\theta_i^\infty$ for all $i$.
Fix now $i$ sufficiently large such that $\pi\sim_{F'',\ep/2''}\theta_i^\infty$.

By Lemma ~\ref{lem:discrete_inverse}, there exist integers $m>n>n_0$ such that $\gamma_{[n,m]}$ is $(F'',\ep'')$-multiplicative and $\theta_i \prec_{F'',\ep''/2} \gamma_{[n,m]}$ and hence $\theta_i ^\infty \prec_{F'',\ep''/2} \gamma_{[n,m]}^\infty$.
This last  relation in conjunction with $\pi\sim_{F'',\ep''/2}\theta_i^\infty$ gives $\pi^\infty\prec_{F'',\ep''} \gamma_{[n,m]}^\infty$.
By Voiculescu's theorem \cite{Voi:Weyl-vn} and Stinespring's theorem \cite[Thm.~1.5.3]{Brown-Ozawa} we have $\gamma_{[n,m]}^\infty\prec_{F'',\ep''} \pi$.  We can then apply \cite[Lemma 5.1]{Dadarlat:MRL} as explained above, to conclude that
$\pi\sim_{F',\ep'} \gamma_{[n,m]}^\infty$.
\end{proof}

\begin{lemma} \label{lem:ses}	
Let $G$ and $H$ be discrete groups and assume that $H$ acts on $G$ by automorphisms. Then there is a split short exact sequence
\[
	0 \to I(G) \rtimes H \to I(G \rtimes H) \to I(H) \to 0\ ,
\]
where we use the maximal crossed product throughout.
\end{lemma}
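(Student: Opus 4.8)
The plan is to route everything through the two canonical group homomorphisms attached to the semidirect product: the quotient $q\colon G\rtimes H\to H$, $(g,h)\mapsto h$, and the inclusion $s\colon H\to G\rtimes H$, $h\mapsto (e,h)$, which satisfy $q\circ s=\mathrm{id}_H$. Passing to full group $C^*$-algebras, these induce $*$-homomorphisms $\pi\colon C^*(G\rtimes H)\to C^*(H)$ and $\sigma\colon C^*(H)\to C^*(G\rtimes H)$ with $\pi\circ\sigma=\mathrm{id}$; in particular $\pi$ is surjective. The first observation I would record is that the augmentations are compatible: since $q$ and $s$ carry group elements to group elements, both $\iota_{G\rtimes H}$ and $\iota_H\circ\pi$ send every element of $G\rtimes H$ to $1$, whence $\iota_{G\rtimes H}=\iota_H\circ\pi$, and symmetrically $\iota_H=\iota_{G\rtimes H}\circ\sigma$.

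From this compatibility the middle and right-hand terms fall out by a short diagram chase. Indeed $I(G\rtimes H)=\ker\iota_{G\rtimes H}=\ker(\iota_H\circ\pi)=\pi^{-1}(I(H))$. Since $\pi$ is surjective this forces $\pi\bigl(I(G\rtimes H)\bigr)=I(H)$, so $\pi$ restricts to a surjection $I(G\rtimes H)\to I(H)$; its kernel is $\ker\pi$, because $\ker\pi=\pi^{-1}(0)\subset\pi^{-1}(I(H))=I(G\rtimes H)$. The same compatibility gives $\sigma(I(H))\subset I(G\rtimes H)$, so $\sigma$ restricts to a $*$-homomorphism $I(H)\to I(G\rtimes H)$ splitting the surjection. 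This yields a split short exact sequence $0\to\ker\pi\to I(G\rtimes H)\to I(H)\to 0$, and it remains only to identify $\ker\pi$ with $I(G)\rtimes H$.

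For the identification I would invoke the canonical isomorphism $C^*(G\rtimes H)\cong C^*(G)\rtimes H$ for the maximal crossed product, under which $\pi$ corresponds to the map $C^*(G)\rtimes H\to C^*(H)$ induced by the augmentation $\iota_G\colon C^*(G)\to\mathbb{C}$ (both send $\delta_g u_h$ to the generator $u_h$, matching $\pi$ on group elements). Here $\iota_G$ is $H$-equivariant: the automorphisms of $C^*(G)$ coming from the $H$-action permute group elements and hence fix $\iota_G$, so $I(G)=\ker\iota_G$ is an $H$-invariant ideal and $I(G)\rtimes H$ is a genuine ideal of $C^*(G)\rtimes H$.

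The only step beyond bookkeeping is the exactness input: for the maximal crossed product the functor $-\rtimes H$ is exact, so the $H$-equivariant sequence $0\to I(G)\to C^*(G)\to\mathbb{C}\to 0$ produces $0\to I(G)\rtimes H\to C^*(G)\rtimes H\to\mathbb{C}\rtimes H\to 0$ with $\mathbb{C}\rtimes H=C^*(H)$. Hence $\ker\pi=I(G)\rtimes H$, which completes the proof. I expect this exactness of the maximal crossed product to be the crux of the argument: the reduced crossed product would in general fail to be exact unless $H$ is exact, which is precisely why the statement is phrased with the maximal crossed product throughout.
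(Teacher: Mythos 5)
Your proof is correct and follows essentially the same route as the paper: both arguments rest on exactness of the maximal crossed product applied to the $H$-equivariant sequence $0 \to I(G) \to C^*(G) \to \C \to 0$, the identifications $C^*(G)\rtimes H \cong C^*(G \rtimes H)$ and $\C \rtimes H \cong C^*(H)$, and the splitting induced by the group section $H \to G \rtimes H$. Your preimage computation $I(G\rtimes H)=\pi^{-1}(I(H))$ together with the identification $\ker\pi = I(G)\rtimes H$ is exactly the content of the paper's diagram chase, just written out in terms of kernels instead of a commutative diagram.
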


\begin{proof}
By the universality property of the crossed product \cite[p.170]{KirWas:e-bundles}, the sequence
\[
	0 \to I(G) \rtimes H \to C^*(G) \rtimes H \to \C \rtimes H \to 0
\]
is exact. There are natural isomorphisms $\C \rtimes H \cong C^*(H)$ and $C^*(G) \rtimes H \cong C^*(G \rtimes H)$. Let $J$ be the kernel of the map $I(G \rtimes H) \to I(H)$ induced by the projection $G \rtimes H \to H$. The rows in the commutative diagram
\[
	\xymatrix{
		0 \ar[r] & J \ar[r] \ar[d] & I(G \rtimes H) \ar[d] \ar[r] & I(H) \ar[d] \ar[r] & 0 \\
		0 \ar[r] & I(G) \rtimes H \ar[r] & C^*(G \rtimes H) \ar[d] \ar[r] & C^*(H) \ar[r] \ar[d] & 0 \\
		& & \C \ar[r]_-{=} & \C
	}
\]
are exact and a diagram chase yields that the map $I(G \rtimes H) \to C^*(G \rtimes H)$ restricts to an isomorphism $J \cong I(G) \rtimes H$. The sequence splits via the $*$-homomorphism induced by the splitting $H \to G \rtimes H$.
\end{proof}

\begin{proposition} \label{prop:crossed_prod}	
Let $G$ and $H$ be countable discrete amenable groups, such that $H$ acts on $G$ by automorphisms. Suppose that $H$ and $I(G) \rtimes H$ are connective. Then $G \rtimes H$ is connective.
\end{proposition}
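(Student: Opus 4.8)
The plan is to feed the split exact sequence of Lemma~\ref{lem:ses} into the representation-theoretic criterion of Proposition~\ref{new-characterization}. Write $J=I(G)\rtimes H$, $E=I(G\rtimes H)$ and $Q=I(H)$, so that Lemma~\ref{lem:ses} gives a split exact sequence $0\to J\to E\xrightarrow{p}Q\to 0$ with a $*$-homomorphic splitting $s\colon Q\to E$. Since $G$ and $H$ are amenable, the extension $G\rtimes H$ is amenable, hence $C^*(G\rtimes H)$ is nuclear and all three algebras $J$, $E$, $Q$ are nuclear, in particular exact; thus Proposition~\ref{new-characterization} applies to each. It therefore suffices to prove the abstract statement: if $0\to J\to E\xrightarrow{p}Q\to 0$ is a split exact sequence of separable exact $C^*$-algebras with $*$-homomorphic splitting $s$ and $J$, $Q$ are connective, then $E$ is connective. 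Fix an essential unital representation $\pi\colon\widetilde E\to L(\mathcal H)$, a finite set $F\subset\widetilde E$ and $\ep>0$; by Proposition~\ref{new-characterization} it is enough to build an $(F,\ep)$-multiplicative ucp path from $\pi$ to $\iota^\infty$, and I will do this in two stages joined by concatenation.

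The second stage is the easy one and pins down the target. Consider the representation $\rho:=\pi\circ\widetilde s\colon\widetilde Q\to L(\mathcal H)$; it is essential, since $\pi$ is essential and $s$ is injective, so Proposition~\ref{new-characterization} applied to the connective algebra $Q$ yields an $(F',\ep')$-multiplicative ucp path from $\rho$ to $\iota^\infty$ on $\widetilde Q$. Precomposing with the $*$-homomorphism $\widetilde p$ (which preserves both complete positivity and multiplicativity) gives an $(F,\ep)$-multiplicative ucp path on $\widetilde E$ from $\rho\circ\widetilde p=\pi\circ\widetilde{sp}$ to $\iota^\infty\circ\widetilde p=\iota^\infty$. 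Here $\pi\circ\widetilde{sp}$ is the representation of $\widetilde E$ that agrees with $\pi$ on $\widetilde s(Q)$ and annihilates $J$.

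The first stage must connect $\pi$ to the representation $\pi\circ\widetilde{sp}$, that is, deform away the ideal part of $\pi$ while leaving the $\widetilde s(Q)$-part untouched. Here I invoke connectivity of $J$: since $\pi|_{\widetilde J}$ is essential, Proposition~\ref{new-characterization} gives an $(F_J,\ep_J)$-multiplicative ucp path $\varphi^J_t\colon\widetilde J\to L(\mathcal H)$ with $\varphi^J_0=\pi|_{\widetilde J}$ and $\varphi^J_1=\iota^\infty$. I would transport this deformation to $\widetilde E$ by means of a quasicentral approximate unit $(u_\lambda)$ for $J$ inside $E$, forming the two-block (asymptotically unital) cp maps
\[
\Psi^{\lambda}_t(x)=\pi\bigl((1-u_\lambda)^{1/2}\,x\,(1-u_\lambda)^{1/2}\bigr)\ \oplus\ \varphi^J_t\bigl(u_\lambda^{1/2}\,x\,u_\lambda^{1/2}\bigr)
\]
on $\mathcal H\oplus\mathcal H$, noting that $u_\lambda^{1/2}\in J$ forces the second argument into $J\subset\widetilde J$. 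Asymptotic multiplicativity of the second block is the crucial point and it holds precisely because $u_\lambda^{1/2}\in J$: for $x,y\in\widetilde E$ one has $y\,u_\lambda^{1/2}\in J$, so $u_\lambda^{1/2}x(u_\lambda-1)y\,u_\lambda^{1/2}\to 0$ and the defect $\varphi^J_t(u_\lambda^{1/2}xu_\lambda^{1/2})\varphi^J_t(u_\lambda^{1/2}yu_\lambda^{1/2})-\varphi^J_t(u_\lambda^{1/2}xy\,u_\lambda^{1/2})$ tends to $0$ uniformly in $t$; quasicentrality makes the first block multiplicative as well. At $t=1$ the second block vanishes on $J$ and $\Psi^\lambda_1$ is a representation factoring through $p$, which (after Voiculescu absorption) I match to $\pi\circ\widetilde{sp}$; at $t=0$ both blocks are compressions of $\pi$ by $1-u_\lambda$ and $u_\lambda$, and quasicentrality together with Voiculescu's theorem \cite{Voi:Weyl-vn} identifies $\Psi^\lambda_0$, up to approximate unitary equivalence, with $\pi$.

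I expect the genuine difficulty to lie entirely in this first stage: transporting the connective deformation of the ideal $J$ to all of $E$ while preserving asymptotic multiplicativity and correctly matching both endpoints. A naive Arveson extension \cite{Brown-Ozawa} of $\varphi^J_t$ to $\widetilde E$ would not be multiplicative, so the quasicentrality of $(u_\lambda)$—chosen after $F$, $\ep$ and uniformly along the path—is what must carry the estimate, and the splitting $s$ is essential because it supplies the fixed $\widetilde s(Q)$-part against which the ideal is deformed. Two further technical points need care: the order of quantifiers between the choice of $(u_\lambda)$ and the finite sets $F_J\subset\widetilde J$ fed into Proposition~\ref{new-characterization} for $J$, and the use of Voiculescu's theorem to absorb the auxiliary summand and to keep the intermediate maps approximately unitarily equivalent to honest essential representations. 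Once the two stages are concatenated, Proposition~\ref{new-characterization} yields connectivity of $E=I(G\rtimes H)$, i.e. connectivity of $G\rtimes H$.
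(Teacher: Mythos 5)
Your reduction coincides with the paper's: Lemma~\ref{lem:ses} reduces the proposition to the statement that a split extension $0\to J\to E\xrightarrow{p}Q\to 0$ of separable nuclear connective $C^*$-algebras has connective middle algebra, and your Stage~2 (pushing a deformation of $\widetilde{Q}$ forward along $\widetilde{p}$) is correct. But the paper does not reprove that permanence statement: it simply quotes it, as Theorem~3.3(d) of \cite{Dad-Pennig-homotopy-symm}. Your attempt to prove it from scratch breaks down in Stage~1, exactly the step you yourself identify as the crux.

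The concrete gap is that the maps $\Psi^\lambda_t$ are not approximately multiplicative, for \emph{any} choice of quasicentral approximate unit. Your justification that $u_\lambda^{1/2}x(u_\lambda-1)y\,u_\lambda^{1/2}\to 0$ ``because $yu_\lambda^{1/2}\in J$'' conflates the convergence $\lim_\lambda(1-u_\lambda)a=0$ for a \emph{fixed} $a\in J$ with convergence along the diagonal, where $a=yu_\lambda^{1/2}$ moves with $\lambda$. Take $x=y=1$ (the unit cannot be avoided, since your maps must be compared with unital representations of $\widetilde{E}$): the expression becomes $u_\lambda(u_\lambda-1)$, on which quasicentrality has no bearing, and $\lVert u_\lambda(1-u_\lambda)\rVert$ cannot become small. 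Indeed, if $\lVert u_\lambda(1-u_\lambda)\rVert<\delta(1-\delta)$ for some $\delta<1/2$, then $\sigma(u_\lambda)$ misses $[\delta,1-\delta]$, so $\chi_{(1-\delta,1]}(u_\lambda)$ is a projection in $J$, nonzero as soon as $\lVert u_\lambda\rVert>1-\delta$ (which holds eventually for any approximate unit of positive contractions); since $J=I(G)\rtimes H$ is connective, hence has no nonzero projections, this is impossible, and therefore $\liminf_\lambda\lVert u_\lambda(1-u_\lambda)\rVert\ge 1/4$. The same computation, with $1-u_\lambda$ in place of $u_\lambda$, refutes the claim that ``quasicentrality makes the first block multiplicative as well'': its defect at $x=y=1$ is again $\lVert u_\lambda(1-u_\lambda)\rVert$. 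Consequently both endpoint matchings collapse: $\lVert\Psi^\lambda_0(1)^2-\Psi^\lambda_0(1)\rVert\ge 1/4$, so $\Psi^\lambda_0$ cannot satisfy $\Psi^\lambda_0\sim_{F,\ep'}\pi$ for small $\ep'$ (a map that far from being idempotent at $1$ is not close to any unitary conjugate of a representation), and likewise $\Psi^\lambda_1=\pi\bigl((1-u_\lambda)^{1/2}\,\cdot\,(1-u_\lambda)^{1/2}\bigr)\oplus 0$ is not close to any representation, in particular not to $\pi\circ\widetilde{sp}$, with or without Voiculescu absorption. This overlap term $u_\lambda(1-u_\lambda)$ is precisely why soft quasicentral cuts cannot glue a deformation of an ideal to the identity on a complementary part; extending a deformation of $\widetilde{J}$ over $\widetilde{E}$ is the genuinely hard point, it uses nuclearity and not just exactness, and it is the content of the theorem the paper cites. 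If you want a self-contained proof, you need the mechanism of \cite{Dad-Pennig-homotopy-symm}, not this construction.
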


\begin{proof}
This follows from Lemma~\ref{lem:ses} together with \cite[Thm.~3.3 (d)]{Dad-Pennig-homotopy-symm}
which states that a split extension of separable nuclear connective $C^*$-algebras  is connective.
\end{proof}

\section{Wreath Products}
For a unital $C^*$-algebra $D$ and a countable set $J$ one defines the minimal tensor product $\bigotimes_J D$
as the inductive limit of finite  minimal products $\bigotimes_F D$ where $F$ runs through the family of finite subsets of $J$ ordered by inclusion. Note that if $F\subset F'$ the connecting map $\bigotimes_F D\to \bigotimes_{F'} D$ is isometric. Indeed this map takes $\bigotimes_{j \in F} a_j$ to $\bigotimes_{j' \in F'} b_{j'}$
where $b_{j'}=a_j$ if $j\in F$ and $b_{j'}=1_D$ if $j'\in F'\setminus F$.

Let $D$ be a unital separable $C^*$-algebra endowed with a unital character $\iota:D \to \C$ and let $I(D)=\mathrm{ker}(\iota)$. Let $\iota_{\otimes} \colon \bigotimes_J D \to \C$ be the character induced by $\iota$ and let
$I(\bigotimes_J D)$ denote its kernel. Let $H$ be a discrete countable group and let
 $J$ be a set with a left action of $H$.
 $H$ acts on $A:= \bigotimes_{J} D$ by permuting the tensor factors: $h \cdot (\bigotimes_{j \in J} a_{j}) = \bigotimes_{j \in J} a_{h^{-1}j}$. We denote this action by $\alpha \colon H \to \Aut{A}$.
Note that $I(\bigotimes_J D)$ is an $H$-invariant ideal of $A$ { since $\iota_{\otimes}\circ \alpha_h=\iota_{\otimes}$ for all $h\in H$.}

The following proposition gives new examples of connective $C^*$-algebras.
\begin{proposition}\label{prop-algebras} Suppose that {$D$ is exact and that}  $I(D)$ is connective. Then for any discrete {countable} {amenable} group $H$ the  crossed product $I(\bigotimes_J D)\rtimes H$ is connective.
\end{proposition}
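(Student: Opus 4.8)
The plan is to verify connectivity of $C:=I(\bigotimes_J D)\rtimes H$ directly through Proposition~\ref{new-characterization}. Since $D$ is exact, so are $\bigotimes_J D$, its ideal $I(\bigotimes_J D)$, and (because $H$ is amenable) the crossed product $C$; thus the characterization applies, and $J$ is countable since $C$ is separable. The implication I need, ``deformation property $\Rightarrow$ connective,'' holds for arbitrary separable algebras and only requires a \emph{single} essential representation: letting the finite set exhaust $\widetilde{C}$ and $\ep\to 0$ recovers an injective null-homotopic cpc asymptotic morphism. So it suffices to fix one essential $\pi$ and produce, for each finite $F\subset\widetilde{C}$ and $\ep>0$, an $(F,\ep)$-multiplicative ucp path $(\varphi_t)_{t\in[0,1]}$ with $\varphi_0=\pi$ and $\varphi_1=\iota^\infty$. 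I take $\pi$ to be the noncommutative Bernoulli representation: fix an essential $\pi_D\colon D\to L(\mathcal{H}_D)$ and a unit reference vector, set $\mathcal{H}=\bigotimes_J\mathcal{H}_D$, let $\widetilde\rho=\bigotimes_J\pi_D$, and let $U_h$ be the permutation unitaries implementing $\alpha$; the covariant pair $(\widetilde\rho,U)$ integrates to an essential representation $\pi$ of $\widetilde{C}$. A given $F$ is, up to $\ep$, supported on finitely many tensor legs $F_0\subset J$ and involves finitely many group elements $F_H\subset H$.

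For the core deformation I would use connectivity of $I(D)$. Since $\widetilde{I(D)}=D$ is exact, Proposition~\ref{new-characterization} supplies, for any prescribed $(F_D,\ep_D)$, an $(F_D,\ep_D)$-multiplicative ucp path $\varphi^{(s)}_D\colon D\to C[0,1]\otimes L(\mathcal{H}_D)$ from $\pi_D$ to the trivial character. Fixing a large finite $E\subset J$ and applying these leg deformations \emph{simultaneously}, with a single parameter $s$, on the legs in $E$ while leaving the remaining legs in $\pi_D$, defines one ucp map $\Phi^{(s)}_E\colon\bigotimes_J D\to C[0,1]\otimes L(\mathcal{H})$ with $\Phi^{(0)}_E=\widetilde\rho$; using a single $s$ is what keeps the target inside one copy of $C[0,1]\otimes L(\mathcal{H})$. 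Choosing $\ep_D$ small and $F_D$ large (in terms of $F_0,\ep,|F_0|$) makes $\Phi^{(s)}_E$ almost multiplicative on the $F_0$-supported elements, since a minimal tensor product of almost multiplicative maps is almost multiplicative with controlled constants. Moreover, once $E\supseteq\bigcup_{h\in F_H}hF_0$, a direct computation shows $\Phi^{(s)}_E$ is exactly $F_H$-equivariant on elements supported on $F_0$; and at $s=1$ the legs in $E$ carry the trivial character, so $\Phi^{(1)}_E$ annihilates every augmentation-ideal element supported on $E$.

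To pass to $C$ I would feed the approximately covariant pair $(\Phi^{(s)}_E,U)$ into the standard amenable crossed-product construction: compressing the regular direction by a F{\o}lner set $S\subset H$ (almost invariant under $F_H$) yields a genuinely completely positive map $\psi^{(s)}\colon C\to C[0,1]\otimes L(\mathcal{H}\otimes\ell^2 S)$ that is $(F,\ep)$-multiplicative, exactly as in the quasidiagonality arguments for Bernoulli crossed products (cf.\ \cite{ORS:qd_elem_amen,TWW:quasidiagonality}); this is where amenability of $H$ is essential. After identifying the ampliated space with $\mathcal{H}$ (possible since $\mathcal{H}$ is infinite dimensional, cf.\ \cite[Prop.~2.5]{Dad-Pennig-homotopy-symm}), one has $\psi^{(0)}=\pi$, and $\psi^{(1)}$ agrees with $\iota^\infty$ on $F$ and on the product set $F\cdot F$ provided $E\supseteq\bigcup_{h,h'\in F_H}hh'F_0$, since both annihilate the ideal there. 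I then concatenate $(\psi^{(s)})_s$ with the straight-line homotopy $t\mapsto(1-t)\psi^{(1)}+t\,\iota^\infty$: this path is ucp, and on $F$ it is \emph{exactly} multiplicative, because on $F$ and $F\cdot F$ both endpoints coincide with $\iota^\infty$, a genuine $*$-homomorphism, so the convex combinations reproduce its products. The concatenation is the required $(F,\ep)$-multiplicative deformation from $\pi$ to $\iota^\infty$, and Proposition~\ref{new-characterization} then yields connectivity of $C$.

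The hard part will be running a single ucp homotopy, valued in one $C[0,1]\otimes L(\mathcal{H})$, that simultaneously tames the infinitely many tensor legs and the infinite amenable group while landing on $\iota^\infty$ on the nose. The tensor side is handled by the finite support of $F$ together with the leg-wise $I(D)$-deformations; the group side forces amenability, and the genuinely delicate point is that the finite-stage leg deformation must induce a \emph{completely positive}, almost multiplicative map on the crossed product, which is precisely where the F{\o}lner compression (approximate equivariance together with complete positivity) is indispensable. The two-phase concatenation is what reconciles ``deform only finitely many legs'' with the exact endpoint condition $\varphi_1=\iota^\infty$.
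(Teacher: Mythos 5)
Your high-level skeleton (use Proposition~\ref{new-characterization} in both directions, and build the deformation leg-by-leg out of connectivity of $I(D)$) is the same as the paper's, but your execution at the crossed-product stage contains a genuine gap, and it occurs exactly at the point where the paper does something different. Because you deform only the finitely many legs in $E$, your map $\Phi_E^{(s)}$ is equivariant only locally, and you then rely on a F{\o}lner compression to produce a cp, $(F,\ep)$-multiplicative map on $I(A)\rtimes H$, ``exactly as in the quasidiagonality arguments for Bernoulli crossed products.'' That step fails: compression of a (regular) covariant representation by $1\otimes P_S$ is completely positive, but it is \emph{not} almost multiplicative in operator norm, no matter how invariant the F{\o}lner set $S$ is. The F{\o}lner condition makes the boundary terms small in relative rank, not in norm. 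Concretely, on $\ell^2(\Z)$ with $S=\{0,\dots,N\}$ one has $P_S\lambda_{-1}P_S\lambda_{1}P_S-P_S=-P_{\{N\}}$, of norm $1$ for every $N$; in your setting the same defect appears for the pair $x=au_h$, $y=u_h^*a\in I(A)\rtimes H$ (with $a\in I(A)$ supported on $F_0$, $\|a\|=1$), where $\psi^{(s)}(x)\psi^{(s)}(y)-\psi^{(s)}(xy)$ equals $-\tilde\Phi_E^{(s)}(a)^2\bigl(1\otimes P_{S\setminus hS}\bigr)$ up to the small tensor-leg defect, and the diagonal entries of $\tilde\Phi_E^{(s)}(a)$ over the boundary of $S$ have norm $\approx\|a\|$ (at $s=0$ they equal $\|a\|$, since there you have a faithful representation). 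This is not a matter of choosing constants more carefully: if F{\o}lner compression were norm-almost-multiplicative, quasidiagonality of $C^*_r(H)$ for amenable $H$ would follow in one line, whereas it required \cite{TWW:quasidiagonality}; and neither \cite{ORS:qd_elem_amen} nor \cite{TWW:quasidiagonality} argues by such a compression. (A secondary point: faithfulness of your covariant Bernoulli representation $\pi$ on the crossed product also needs an argument; it is not automatic for integrated forms on $\mathcal H$ itself, as opposed to the regular representation on $\mathcal H\otimes\ell^2H$.)

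The paper resolves precisely the tension you name at the end (``deform only finitely many legs'' versus ``vanish on the whole ideal at the endpoint'') by deforming \emph{all} legs with the \emph{same} deformation: $\Phi_n^{(t)}=\bigotimes_J\varphi_n^{(t)}\colon A\to B=\bigotimes_JL(\mathcal H)$. Because every leg carries the identical map, $\Phi_n$ is \emph{exactly} equivariant for the two Bernoulli actions, so the Raeburn--Sinclair--Williams theorem \cite[Thm.~3.5~(d)]{paper:Raeburn} produces genuine ucp maps $\widetilde\Phi_n\colon A\rtimes H\to C[0,1]\otimes(B\rtimes H)$ with $\widetilde\Phi_n(au_h)=\Phi_n(a)v_h$. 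Their multiplicativity defect reduces, via the computation (\ref{eqn:asy_mult}), to that of $\Phi_n$ on $A$, which is controlled on finitely supported tensors because unital legs contribute zero defect; and at $t=1$ every leg is the character $\iota$, so $\widetilde\Phi_n^{(1)}$ kills $I(A)\rtimes H$ exactly. No finite-dimensional or F{\o}lner approximation of $H$ is needed, because connectivity only requires an injective null-homotopic asymptotic morphism into $C_0[0,1)\otimes E$ for some separable $E\subset B\rtimes H$ -- the target is allowed to be huge. Amenability of $H$ enters only to identify full and reduced crossed products so that $\widetilde\Phi_n^{(0)}$ is isometric, via \cite[Thm.~7.7.5]{Ped:C*-aut}. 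So the missing idea in your proposal is the all-legs-at-once equivariant construction; with it, the F{\o}lner compression (and the appeal to \cite{ORS:qd_elem_amen}, \cite{TWW:quasidiagonality}, which the paper uses only for Theorem~\ref{thm:Bernoulli}) becomes unnecessary, and without it your compression step cannot be made $(F,\ep)$-multiplicative.
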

\begin{proof} Let $\pi:D \to L(\mathcal{H})$ be a faithful essential representation.
Since $I(D)$ is connective, by Proposition~\ref{new-characterization} there exists a discrete ucp asymptotic morphism
\(
	\{{\varphi}_n \colon D \to C[0,1] \otimes L(\mathcal{H})\}_n
\), such that  ${\varphi}_n^{(0)} = {\pi}$  and ${\varphi}_n^{(1)} = \iota\cdot \mathrm{id}_{\mathcal{H}}$ for all $n\in \N$.  Let $B = \bigotimes_{J} L(\mathcal{H})$
(minimal tensor product) and define $\Phi_n^{(t)} = \bigotimes_{J} \varphi_n^{(t)} \colon A \to B$ for each $t \in [0,1]$.  By Stinespring's theorem, $\Phi_n^{(t)}$ is also a ucp map. For each $n \in \N$ and $a \in A$ the map $t \mapsto \Phi_n^{(t)}(a)$ is continuous. In order to verify this property, since $\Phi_n^{(t)}$ is linear and contractive, we may assume without any loss of generality  that $a = \bigotimes_{j \in F} a_{j} \in \bigotimes_F D \subset A$ for some finite set $F \subset J$. Continuity of $t \mapsto \Phi_n^{(t)}(a)$ follows in this case from the continuity of $t \mapsto \bigotimes_{j \in F} \varphi_n^{(t)}(a_{j})$. Hence, we obtain a sequence of ucp maps
\[
	\{\Phi_n \colon A \to C[0,1] \otimes B\}_n\ .
\]

We show that  this sequence is in fact an asymptotic morphism. To prove asymptotic multiplicativity of $\{\Phi_n\}_n$ we need to verify that $\lim_{n\to \infty}\|\Phi_n(ab)-\Phi_n(a)\Phi_n(b)\|=0$ for $a,b \in A$. Since the $\Phi_n$ are linear and contractive, we may restrict to the case $a = \bigotimes_{j \in F} a_{j}$ and $b = \bigotimes_{j \in F} b_{j}$ for a finite subset $F \subset J$ with $\|a_j\|, \|b_j\|\leq 1$ for all $j\in F$. Then, for each $t\in [0,1]$ we have
\begin{align*}
	&\ \, \lVert \Phi^{(t)}_n(a)\Phi^{(t)}_n(b) - \Phi^{(t)}_n(ab)  \rVert \\
	= & \ \left\lVert \bigotimes_{j \in F}  \varphi^{(t)}_n(a_{j})\varphi^{(t)}_n(b_{j}) - \bigotimes_{j \in F}\varphi^{(t)}_n(a_{j} b_{j}) \right\rVert \\
	\leq & \sum_{j \in F} \lVert \varphi^{(t)}_n(a_{j})\varphi^{(t)}_n(b_{j}) - \varphi^{(t)}_n(a_{j}b_{j}) \rVert,
\end{align*}
and hence \[\lVert \Phi_n(a)\Phi_n(b) - \Phi_n(ab)  \rVert \leq  \sum_{j \in F} \lVert \varphi_n(a_{j})\varphi_n(b_{j}) - \varphi_n(a_{j}b_{j}) \rVert.\]
Each term of the sum from above  converges to $0$ for $n \to \infty$.
Thus, the ucp maps $\Phi_n \colon A \to C[0,1] \otimes B$ form a discrete asymptotic morphism such that $\Phi_n^{(0)}=\bigotimes _J \pi$ is an isometric $*$-homomorphism and $\Phi_n^{(1)} = \iota_{\otimes} \cdot 1_B$.

 Let $\beta \colon H \to \Aut{B}$ be the action of $H$ on $B$ that permutes the tensor factors: $\beta_h (\bigotimes_{j \in J} b_{j}) = \bigotimes_{j \in J} b_{h^{-1}j}$.
 Then we have $\Phi_n^{(t)} \circ \alpha_{h} = \beta_{h} \circ \Phi_n^{(t)}$ for all $h\in H$ and $t\in [0,1]$ or equivalently
\(	\Phi_n \circ \alpha_{h} = \gamma_{h} \circ \Phi_n,\)
where $\gamma_h=id_{C[0,1]}\otimes\beta_{h}$.
Since $H$ acts trivially on $C[0,1]$, there is a canonical isomorphism of crossed products $(C[0,1] \otimes B) \rtimes H \cong C[0,1] \otimes (B \rtimes H)$ (see for example \cite[Ex.~4.1.3]{Brown-Ozawa}). By \cite[Thm.~3.5~(d)]{paper:Raeburn}, ${\Phi}_n$ induces a ucp map:
\[
	\widetilde{\Phi}_n \colon A \rtimes H \to C[0,1] \otimes (B \rtimes H)
\]
 such that for all $a\in A$ and $h\in H$, $\widetilde{\Phi}_n(a u_h)={\Phi}_n(a) v_h$, where we denote by $u_h$ and $v_h$ the canonical unitaries of the corresponding crossed products so that
  $\alpha_h(a)=u_h a u_h^*$ and
 $\gamma_h(b)=v_h b v_h^*$.  Let $a,a' \in A$ and $h,{h'}\in H$. The identities
\begin{align} \label{eqn:asy_mult}
	 & \lVert \widetilde{\Phi}_n(a u_h)\widetilde{\Phi}_n(a' u_{h'}) - \widetilde{\Phi}_n(au_h\,a'u_{h'}) \rVert \\
	=\,& \lVert \Phi_n(a)\,\gamma_h(\Phi_n(a')) v_{hh'}- \Phi_n(a\,\alpha_{h}(a')) v_{hh'}\rVert \notag \\
	=\,& \lVert \Phi_n(a)\,\Phi_n(\alpha_h(a')) - \Phi_n(a\,\alpha_h(a')) \rVert \notag
\end{align}
show that the sequence $\widetilde{\Phi}_n$ is asymptotically multiplicative.
By Theorem~7.7.5 from \cite{Ped:C*-aut},
any equivariant embedding of $C^*$-algebras induces an embedding of reduced crossed products.
It follows  that $\widetilde{\Phi}_n^{(0)} \colon A \rtimes H \to  B \rtimes H$ is an isometric $*$-homomorphism since ${\Phi}_n^{(0)}$ has that property.
Let
\[
	\eta_n = \left.\widetilde{\Phi}_n\right|_{I(A) \rtimes H} \colon I(A) \rtimes H \to C[0,1] \otimes (B \rtimes H).
\]
We have that $\eta_n^{(0)}$ is an isometric $*$-homomorphism, and $\eta_n^{(1)} = 0$ since $\widetilde{\Phi}_n^{(1)}(au_h) = \Phi_n^{(1)}(a)v_h=\iota_{\otimes}(a) v_h=0$ for all $a\in I(A)=\mathrm{ker}(\iota_{\otimes})$ and $h\in H$.
Finally, since $I(A) \rtimes H$ is separable, there is a separable $C^*$-algebra $E\subset B \rtimes H$ such that
the image of $\eta_n$ is contained in $C_0[0,1)\otimes E$ for all $n$.
Since $\eta_n^{(0)}$ is an isometric $*$-homomorphism,  it follows that $I(A) \rtimes H$ is connective, since Definition~\ref{def:connectivity} is verified as a consequence of the
 ``if\,'' part of Proposition~\ref{new-characterization}.\end{proof}
Let $G$ and $H$ be countable discrete groups and let $J$ be a set with a left action of $H$. Recall that the wreath product is defined as
\[
	G \wr_{\substack{{ }\\ J}} H = \left(\bigoplus_{J} G\right) \rtimes H\ ,
\]
where $H$ acts on the direct sum via the action on the indices.

\begin{theorem} \label{thm:wreath-product}
Let $G$ and $H$ be countable discrete amenable  groups  and let $J$ be a countable $H$-set. If $G$ and $H$ are connective, then the wreath product $G \wr_{\substack{{ }\\ J}} H$ is also connective.
\end{theorem}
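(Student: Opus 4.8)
The plan is to reduce the statement to the two propositions just established, via the identification of $C^*(\bigoplus_J G)$ with an infinite tensor product. Writing the wreath product as $(\bigoplus_J G)\rtimes H$ and setting $N := \bigoplus_J G$, I would apply Proposition~\ref{prop:crossed_prod} with $N$ playing the role of the base group. Since $G$ is amenable, the restricted direct sum $N$ is amenable, and the wreath-product action of $H$ on $N$ is precisely the permutation of the summands induced by the $H$-action on $J$. As $H$ is connective by hypothesis, Proposition~\ref{prop:crossed_prod} reduces the theorem to showing that $I(N)\rtimes H$ is connective.

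The crucial identification is $C^*(N)=C^*(\bigoplus_J G)\cong\bigotimes_J C^*(G)$. I would establish this at the level of inductive systems: $N=\varinjlim_F \bigoplus_F G=\varinjlim_F \prod_F G$ over the finite subsets $F\subset J$, whence $C^*(N)=\varinjlim_F C^*(\prod_F G)=\varinjlim_F\bigotimes_F C^*(G)=\bigotimes_J C^*(G)$. Here amenability of $G$ makes $C^*(G)$ nuclear, so that $C^*(\prod_F G)\cong\bigotimes_F C^*(G)$ holds unambiguously and the connecting maps are exactly the unital inclusions used to define $\bigotimes_J$. Putting $D:=C^*(G)$, this isomorphism carries the augmentation character of $C^*(N)$ to $\iota_{\otimes}\colon\bigotimes_J D\to\C$, hence $I(N)\cong I(\bigotimes_J D)$, and it intertwines the permutation action of $H$ on $C^*(N)$ with the factor-permuting action $\alpha$ of Proposition~\ref{prop-algebras}. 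Consequently $I(N)\rtimes H\cong I(\bigotimes_J D)\rtimes H$ as $C^*$-algebras.

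With this in place, the remaining step is a direct appeal to Proposition~\ref{prop-algebras} applied to $D=C^*(G)$: since $G$ is amenable, $D$ is nuclear and hence exact, and $I(D)=I(G)$ is connective because $G$ is connective. The proposition then shows that $I(\bigotimes_J D)\rtimes H$, and therefore $I(N)\rtimes H$, is connective, which is exactly what the reduction requires.

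I expect the principal work to be bookkeeping rather than conceptual. One must check that the identification $C^*(\bigoplus_J G)\cong\bigotimes_J C^*(G)$ is compatible simultaneously with the augmentation characters and with the $H$-actions, and that the crossed products appearing in the hypotheses of Propositions~\ref{prop:crossed_prod} and~\ref{prop-algebras} are the same ones. Because $G$, $H$, and $N$ are all amenable, the maximal and reduced crossed products coincide and $C^*(G)$ is nuclear, so these compatibility verifications present no genuine obstacle.
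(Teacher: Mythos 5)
Your proposal is correct and follows essentially the same route as the paper: identify $C^*(\bigoplus_J G)\cong\bigotimes_J C^*(G)$ compatibly with the augmentation characters and the $H$-actions, apply Proposition~\ref{prop-algebras} with $D=C^*(G)$ to get connectivity of $I(\bigotimes_J C^*(G))\rtimes H$, and then conclude via Proposition~\ref{prop:crossed_prod}. Your treatment is slightly more detailed than the paper's (spelling out the inductive-limit identification and the nuclearity point), but the argument is the same.
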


\begin{proof}
Let $A = \bigotimes_{J} C^*(G)$. Since $C^*(\bigoplus_{J} G) \cong \bigotimes_{J} C^*(G)$, we have that $C^*(G \wr_{\substack{{ }\\ J}} H) \cong A \rtimes H$, where $H$ acts on $A$ by permuting the tensor factors: $h \cdot (\bigotimes_{j \in J} a_{j}) = \bigotimes_{j \in J} a_{h^{-1}j}$.
Let $\iota \colon C^*(G) \to \C$ be the character induced by the trivial representation and let $\iota_{\otimes} \colon A \to \C$  be the character induced by $\iota$. Note that the isomorphism $\bigotimes_{J} C^*(G) \cong C^*(\bigoplus_{J} G)$ intertwines $\iota_{\otimes}$ with the corresponding character of $C^*(\bigoplus_{J} G)$. Let
\[
	I(A) = \ker(\iota_{\otimes}) \cong I\!\left(\bigoplus_{J} G \right)\ .
\]
 We apply Proposition \ref{prop-algebras} to obtain that $I(A) \rtimes H$ is connective.
 It follows now from Proposition~\ref{prop:crossed_prod} that $G \wr_{\substack{{ }\\ J}} H$ is connective.
\end{proof}

As a consequence of the last theorem we can prove that semidirect products with respect to periodic actions are connective.
\begin{corollary}\label{corollary:periodic}
	Let $G$, $H$ be countable discrete amenable connective groups. Let $\alpha \colon H \to \Aut{G}$ be a homomorphism with finite image. Then the semidirect product $G \rtimes_{\alpha} H$ is connective.
\end{corollary}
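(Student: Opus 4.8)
The plan is to realize $G \rtimes_\alpha H$ as a subgroup of a wreath product covered by Theorem~\ref{thm:wreath-product} and then invoke the fact, recorded in the introduction, that connectivity passes to subgroups. First I would record the structure coming from the finite-image hypothesis. Set $Q=\alpha(H)\subseteq \Aut{G}$, which is a finite subgroup, and let $\pi\colon H\to Q$ be the induced surjection, so that $\alpha_h=\pi(h)$ as an automorphism of $G$ for every $h\in H$. I then view $J:=Q$ as a countable (indeed finite) $H$-set by letting $H$ act through $\pi$ by left translation, $h\cdot q=\pi(h)\,q$, and form $W=\bigl(\bigoplus_{q\in Q}G\bigr)\rtimes H$, the wreath product $G\wr_{J}H$ in which $H$ permutes the coordinates via $J$. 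Since $G$ and $H$ are countable discrete amenable and connective and $J$ is a countable $H$-set, Theorem~\ref{thm:wreath-product} gives that $W$ is connective.

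The heart of the argument is an explicit Kaloujnine--Krasner-type embedding $\Psi\colon G\rtimes_\alpha H\to W$. I would define $\Psi(g,h)=(f_g,h)$, where $f_g\in\bigoplus_{q\in Q}G$ is the (automatically finitely supported) function $f_g(q)=q^{-1}(g)$, using that each $q\in Q$ is an automorphism of $G$. Writing the action of $H$ on $\bigoplus_{q\in Q}G$ as $(h\cdot f)(q)=f(\pi(h)^{-1}q)$, the condition for $\Psi$ to be a homomorphism is the cocycle identity $f_{g\,\alpha_h(g')}=f_g\cdot(h\cdot f_{g'})$. I would verify this coordinatewise: using $\alpha_h=\pi(h)$ and that $q^{-1}$ is an automorphism, the left-hand side at $q$ is $q^{-1}(g)\,\bigl(q^{-1}\pi(h)\bigr)(g')$, while the right-hand side at $q$ is $f_g(q)\,f_{g'}(\pi(h)^{-1}q)=q^{-1}(g)\,\bigl(q^{-1}\pi(h)\bigr)(g')$, so the two agree. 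Injectivity is immediate: if $\Psi(g,h)$ is the identity of $W$, then $h=e$ and $f_g\equiv 1_G$, and evaluating at $q=e$ gives $g=f_g(e)=1_G$.

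Finally, $\Psi$ embeds $G\rtimes_\alpha H$ as a subgroup of the connective group $W$, so connectivity of $G\rtimes_\alpha H$ follows from the closure of connectivity under passage to subgroups (equivalently, $I(G\rtimes_\alpha H)$ is a $C^*$-subalgebra of $I(W)$). I expect the only genuinely substantive step to be the verification of the cocycle identity that turns $\Psi$ into a group homomorphism; the choice of the $H$-set $J=Q$ and the function $f_g(q)=q^{-1}(g)$ is exactly what makes the finite action $\alpha$ compatible with the permutation action on the wreath coordinates. Everything else is bookkeeping together with the two cited inputs, namely Theorem~\ref{thm:wreath-product} and the subgroup-closure of connectivity.
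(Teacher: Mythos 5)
Your proposal is correct and follows essentially the same route as the paper: the paper also embeds $G \rtimes_{\alpha} H$ into the wreath product $G \wr_S H$ over the finite set $S = H/\ker(\alpha) \cong \mathrm{im}(\alpha)$ via the map $\varphi(g)(s) = \dot{\alpha}_{s^{-1}}(g)$, which coincides with your $f_g(q) = q^{-1}(g)$ under the identification $s \leftrightarrow \dot{\alpha}_s$, and then invokes Theorem~\ref{thm:wreath-product} together with the subgroup-closure of connectivity. The only cosmetic difference is that you work directly inside $\Aut{G}$ rather than with the quotient $H/\ker(\alpha)$.
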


\begin{proof}
Let $K = \ker(\alpha)$ and set $S = H / K \cong \text{im}(\alpha)$. The group $S$ is finite by assumption. $H$ acts on $S$ via left multiplication. Denote this action by $\beta$. Let $\pi \colon H \to S$ be the quotient homomorphism and let $\dot{\alpha} \colon S \to G$ be such that $\dot{\alpha} \circ \pi = \alpha$. Consider the group homomorphism
\[
	\varphi \colon G \to \bigoplus_S G
\]
given by $\varphi(g)(s) = \dot{\alpha}_{s^{-1}}(g)$. Observe that $\varphi$ is injective and
\begin{align*}
\varphi(\alpha_h(g))(s) &= \dot{\alpha}_{s^{-1}}(\alpha_h(g)) = \dot{\alpha}_{s^{-1}\pi(h)}(g) =	 \dot{\alpha}_{(\pi(h)^{-1}s)^{-1}}(g) \\
&= \varphi(g)(\pi(h)^{-1}s) = \beta_h(\varphi(g))(s)\ ,
\end{align*}
which proves that $\varphi$ intertwines the two actions on both sides. Hence we obtain a homomorphism $\Phi \colon G \rtimes_{\alpha} H \to G \wr_S H$. Since  connectivity passes to $C^*$-subalgebras, in view of Thm.~\ref{thm:wreath-product} it suffices to prove that $\Phi$ is injective. Since $\Phi(g,h) = (\varphi(g),h)$, this follows from the injectivity of $\varphi$.
\end{proof}

\begin{example} {\rm
Since  connectivity passes to $C^*$-subalgebras, it follows from Thm.~\ref{thm:wreath-product} that if
 $G$ and $H$ are countable discrete amenable connective groups, then any subgroup of  $G \wr H = G \wr_H H$ also is connective

In this way we obtain many interesting examples of connective groups, including the free solvable groups $S_{r,n}$ on $r$ generators of derived length $n$. Every solvable group with $r$ generators of derived length~$n$ is a quotient of $S_{r,n}$. The groups  $S_{r,n}$ can be defined recursively as follows: Let $F^{(0)} = F=F_r$ be the free group on $r$~generators and let $F^{(n+1)} = [F^{(n)}, F^{(n)}]$ be the commutator subgroup of $F^{(n)}$. Then we have $S_{r,n} = F/F^{(n)}$. In particular, $S_{r,1} \cong \Z^r$ and $S_{r,2}$ is the free metabelian group on $r$~generators. In other words $S_{r,2}$ is a metabelian group with $r$~generators which maps surjectively onto any other metabelian group with $r$~generators.
By Magnus' embedding theorem \cite{Mag:embedding} we have the following: Let $G$ be a finitely generated countable discrete group and choose a free group $F = F_r$ on $r$ generators and a normal subgroup $R \triangleleft F$, such that $G$ is isomorphic to $F/R$. Let $R'$ be the commutator subgroup of $R$. Let $A$ be the free abelian group on $r$ generators. Then $F/R'$ embeds as a subgroup into  $A \wr G$. Thus, if $G$ is amenable and is connective, the same holds true for $F/R'$. By induction it follows from this observation that all $S_{r,n}$ is connective.

Chapuis gave a characterization of the subgroups of $\Z^r \wr \Z^s$ in terms of model theory \cite[Cor.~3.1]{Chapuis:free_meta} generalizing the embedding theorem of Magnus in the free metabelian case. He called these $\forall$-free metabelian. By our observations above, they are all connective.}
\end{example}

\section{Quasidiagonality of crossed-products associated to noncommutative Bernoulli shifts}
Prop.~\ref{prop-algebras} leads naturally to the question of quasidiagonality of the crossed products of the type
$(\bigotimes_G D)\rtimes_r G$ for $D$ a separable unital C*-algebra and $G$ a discrete countable group. Theorem ~\ref{thm:Bernoulli} below shows that $(\bigotimes_G D)\rtimes_r G$ is quasidiagonal if and only if $D$ is quasidiagonal and $G$ is amenable.

Recall the following characterisation of quasidiagonality due to Voiculescu. A separable $C^*$-algebra $A$ is quasidiagonal if and only if for every finite subset $F \subset A$ and every $\varepsilon > 0$ there is $n \in \N$ and a cpc map $\psi \colon A \to M_n(\C)$ such that $\lVert \psi(xy) - \psi(x)\psi(y) \rVert < \varepsilon$ for all $x,y \in F$ and $\lVert \psi(x) \rVert \geq \lVert x\rVert - \varepsilon$ for all $x \in F$ \cite[Thm.~1]{Voi:qd}. Note that in fact it suffices to fulfil the condition $\lVert \psi(x) \rVert \geq \lVert x\rVert - \varepsilon$ for just a single element $x\in F$ at a time, since one can then consider finite directs sums of such maps.

All the tensor products in the sequel are minimal tensor products.
\begin{lemma} \label{lem:tensor_dah}
Let $A$, $B$, $C_n$, $D_n$ be separable $C^*$-algebras. If $\{\alpha_n \colon A \to C_n\}_n$ and $\{\beta_n \colon B \to D_n\}_n$ are cpc discrete asymptotic morphisms with
\begin{gather*}
	\limsup_{n \to \infty} \lVert \alpha_n(a) \rVert = \lVert a\rVert\ , \quad
	\lim_{n \to \infty} \lVert \beta_n(b) \rVert = \lVert b\rVert \ , 	
\end{gather*}
then $\{\alpha_n \otimes \beta_n \colon A \otimes B \to C_n \otimes D_n\}_n$ is a cpc discrete asymptotic morphism such that $\limsup_{n \to \infty} \lVert (\alpha_n \otimes \beta_n)(x) \rVert = \lVert x \rVert$ for all $x \in A \otimes B$.
\end{lemma}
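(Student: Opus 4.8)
The plan is to verify the three assertions in turn: that each $\alpha_n\otimes\beta_n$ is cpc, that the sequence is asymptotically multiplicative, and that the $\limsup$ of the norms equals $\lVert x\rVert$; the first two are routine and the norm identity is where the hypotheses genuinely enter. For complete positivity and contractivity I would invoke the standard fact that the minimal tensor product of two cpc maps is again cpc (via Stinespring, exactly as is done for $\Phi_n$ in the proof of Proposition~\ref{prop-algebras}). For asymptotic multiplicativity, since every $\alpha_n\otimes\beta_n$ is contractive and the algebraic tensor product is dense in $A\otimes B$, an $\epsilon/3$-argument reduces the check to elementary tensors $x=a\otimes b$, $y=a'\otimes b'$, where I would use the telescoping estimate
\[
\begin{aligned}
&\lVert \alpha_n(aa')\otimes\beta_n(bb') - \alpha_n(a)\alpha_n(a')\otimes\beta_n(b)\beta_n(b')\rVert\\
&\qquad\le \lVert \alpha_n(aa')-\alpha_n(a)\alpha_n(a')\rVert\,\lVert\beta_n(bb')\rVert + \lVert\alpha_n(a)\alpha_n(a')\rVert\,\lVert\beta_n(bb')-\beta_n(b)\beta_n(b')\rVert,
\end{aligned}
\]
whose right-hand side tends to $0$ because each of $\{\alpha_n\}$, $\{\beta_n\}$ is an asymptotic morphism and the remaining factors are uniformly bounded.

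For the norm identity the inequality $\limsup_n\lVert(\alpha_n\otimes\beta_n)(x)\rVert\le\lVert x\rVert$ is immediate from contractivity, so the content is the reverse inequality. I would phrase the hypotheses through the sequence algebras $\mathcal Q_C=\prod_nC_n/\bigoplus_nC_n$, $\mathcal Q_D=\prod_nD_n/\bigoplus_nD_n$ and $\mathcal Q_{CD}=\prod_n(C_n\otimes D_n)/\bigoplus_n(C_n\otimes D_n)$ (all tensor products minimal): the asymptotic morphisms induce $*$-homomorphisms $\bar\alpha\colon A\to\mathcal Q_C$, $\bar\beta\colon B\to\mathcal Q_D$ whose norms are the respective $\limsup$'s, and the hypotheses say exactly that $\bar\alpha$ and $\bar\beta$ are \emph{isometric}. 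By injectivity of the minimal tensor product with respect to inclusions, $\bar\alpha\otimes\bar\beta\colon A\otimes B\to\mathcal Q_C\otimes\mathcal Q_D$ is isometric, so $\lVert x\rVert=\lVert(\bar\alpha\otimes\bar\beta)(x)\rVert$; meanwhile $\overline{\alpha\otimes\beta}\colon A\otimes B\to\mathcal Q_{CD}$ is the $*$-homomorphism whose norm is precisely $\limsup_n\lVert(\alpha_n\otimes\beta_n)(x)\rVert$, and the goal becomes to show this map is isometric.

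The engine I would build is a one-sided statement, which needs only that the relevant induced map be isometric (so only $\limsup$-type hypotheses, no exactness): for a fixed separable C*-algebra $E$ and an asymptotic morphism $\{\beta_n\colon B\to D_n\}$ with $\bar\beta$ isometric, one has $\limsup_n\lVert(\mathrm{id}_E\otimes\beta_n)(z)\rVert=\lVert z\rVert$ for all $z\in E\otimes B$, and symmetrically for deformations of the first leg. The point is that here the natural maps run in the favourable direction: the quotient $q_D\colon\prod_nD_n\to\mathcal Q_D$ induces a surjective, hence contractive, $*$-homomorphism $\mathrm{id}_E\otimes q_D$ from the liftable corner $(E\otimes\prod_nD_n)/(E\otimes\bigoplus_nD_n)$, embedded naturally and isometrically in $\mathcal Q_{E,D}=\prod_n(E\otimes D_n)/\bigoplus_n(E\otimes D_n)$, onto $E\otimes\mathcal Q_D$, carrying $\overline{\mathrm{id}\otimes\beta}(z)$ to $(\mathrm{id}_E\otimes\bar\beta)(z)$; contractivity of a surjection gives $\lVert(\mathrm{id}_E\otimes\bar\beta)(z)\rVert\le\limsup_n\lVert(\mathrm{id}_E\otimes\beta_n)(z)\rVert$, and the left side equals $\lVert z\rVert$ since $\mathrm{id}_E\otimes\bar\beta$ is isometric. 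Applying this once to deform the $A$-leg (with fixed algebra $\mathcal Q_D$, using $\bar\alpha$ isometric) and once to deform the $B$-leg (with fixed algebra $C_n$, using $\bar\beta$ isometric), I would obtain for $x=\sum_i a_i\otimes b_i$ the iterated identity $\lVert x\rVert=\limsup_n\limsup_m\bigl\lVert\sum_i\alpha_n(a_i)\otimes\beta_m(b_i)\bigr\rVert$.

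The main obstacle is the final collapse of this iterated $\limsup$ to the diagonal $\limsup_n\bigl\lVert\sum_i\alpha_n(a_i)\otimes\beta_n(b_i)\bigr\rVert$, and this is exactly where the asymmetry between the two hypotheses must be exploited. The second-leg elements $b_i$ come from a \emph{fixed} finite set, and the genuine convergence $\lVert\beta_m(b)\rVert\to\lVert b\rVert$ (rather than the mere $\limsup$ that is all one has for $\alpha$) forces $\beta_m$ to be isometric and multiplicative on this finite set at a rate that is independent of the first index, so that $\bigl\lVert\sum_i c_i\otimes\beta_m(b_i)\bigr\rVert\to\bigl\lVert\sum_i c_i\otimes b_i\bigr\rVert$ uniformly over contractions $c_i$ in the first leg; specialising $m=n$ then recovers the diagonal and yields $\limsup_n\lVert(\alpha_n\otimes\beta_n)(x)\rVert\ge\lVert x\rVert$. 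Establishing this uniform approach is the delicate technical heart: it is precisely the need to control $\beta_m$ on the fixed $B$-leg independently of the varying $C_n$-leg, and it is the structural reason the hypothesis on $\beta_n$ is a limit while that on $\alpha_n$ is only a $\limsup$.
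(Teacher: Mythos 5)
Your preliminary steps are sound: the cpc and asymptotic multiplicativity checks match the paper's (which simply cites the argument of Proposition~\ref{prop-algebras}), and your one-sided ``engine'' is correct as stated --- the identification $(E\otimes\prod_n D_n)\cap\bigoplus_n(E\otimes D_n)=E\otimes\bigoplus_n D_n$ and the contractive surjection of the liftable corner onto $E\otimes\bigl(\prod_n D_n/\bigoplus_n D_n\bigr)$ both hold without exactness, and they do yield the iterated identity $\lVert x\rVert=\limsup_n\limsup_m\lVert(\alpha_n\otimes\beta_m)(x)\rVert$. The genuine gap is exactly where you place it: the collapse to the diagonal. The uniform statement you invoke --- that $\lVert\sum_i c_i\otimes\beta_m(b_i)\rVert\to\lVert\sum_i c_i\otimes b_i\rVert$ uniformly over contraction tuples $(c_i)$ in \emph{arbitrary} $C^*$-algebras --- is asserted, not proved, and it does not follow from $\lim_m\lVert\beta_m(b)\rVert=\lVert b\rVert$ on a finite set by anything you write. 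By a compression argument (cutting $C\subseteq L(\mathcal{H})$ by finite-rank projections), that claim is equivalent to lower bounds for $\mathrm{id}_{M_n}\otimes\beta_m$ on $M_n\otimes\mathrm{span}(b_1,\dots,b_k)$ that are \emph{uniform in the matrix size $n$}; your engine (plus a subsequence argument using the genuine limit) gives this for each fixed $n$, and even uniformly on the compact set of contraction tuples in a fixed $M_n$, but nothing supplies uniformity across $n$. Moreover, the unproven claim is at least as strong as the lemma itself: granting it, the lemma follows in three lines by taking $C=C_n$, $c_i=\alpha_n(a_i)$, $m=n$. So the hard content of the lemma has been relocated into an assertion, not established.

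The paper sidesteps this uniformity problem entirely by working with the single induced $*$-homomorphism $\eta\colon A\otimes B\to\prod_n(C_n\otimes D_n)/\bigoplus_n(C_n\otimes D_n)$: the desired norm identity is precisely the statement that $\eta$ is injective (an injective $*$-homomorphism is isometric, and $\lVert\eta(x)\rVert=\limsup_n\lVert(\alpha_n\otimes\beta_n)(x)\rVert$). Injectivity is then proved with Kirchberg's Slice Lemma \cite[Lem.~4.1.9]{Ror:encyclopedia}: if $J=\ker(\eta)\neq 0$, there is $0\neq x\in A\otimes B$ with $x^*x\in J$ and $xx^*=a\otimes b$ an elementary tensor, whence
\[
0=\lVert\eta(xx^*)\rVert=\limsup_{n\to\infty}\bigl(\lVert\alpha_n(a)\rVert\,\lVert\beta_n(b)\rVert\bigr)=\Bigl(\limsup_{n\to\infty}\lVert\alpha_n(a)\rVert\Bigr)\lim_{n\to\infty}\lVert\beta_n(b)\rVert=\lVert a\rVert\,\lVert b\rVert,
\]
forcing $x=0$, a contradiction. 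The Slice Lemma is the device that reduces the problem to a single elementary tensor, where the $\limsup$/$\lim$ asymmetry you correctly identified does all the work in one line --- no completely isometric or uniform control is ever needed. I recommend you adopt this route; alternatively, you would need an actual proof of your uniform claim, keeping in mind that it may be strictly stronger than the lemma.
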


\begin{proof}
	As in the proof of Prop.~\ref{prop-algebras} one verifies that $\alpha_n \otimes \beta_n$ is a cpc discrete asymptotic morphism.
	Therefore it suffices to prove that the induced $*$-homomorphism
	\[
		\eta \colon A \otimes B \to \frac{\prod_{n \in \N} C_n \otimes D_n}{\bigoplus_{n \in \N} C_n \otimes D_n}
	\]
	is injective. Seeking a contradiction suppose this is false. Let $J = \ker(\eta) \neq 0$. By Kirchberg's Slice Lemma \cite[Lem.~4.1.9]{Ror:encyclopedia} there is $0 \neq x \in A \otimes B$, such that $x^*x \in J$ and $xx^* = a \otimes b$ for some $a \in A$, $b \in B$. We have
	\begin{align*}
		0 &= \lVert \eta(x^*x) \rVert = \lVert \eta(xx^*) \rVert = \limsup_{n \to \infty} \lVert \alpha_n(a) \otimes \beta_n(b) \rVert \\
		&= \limsup_{n \to \infty} \left(\lVert \alpha_n(a) \rVert\,\lVert \beta_n(b) \rVert\right) = \left(\limsup_{n \to \infty}\lVert \alpha_n(a) \rVert\right) \lVert b \rVert = \lVert a \rVert\,\lVert b \rVert\ .
	\end{align*}
	But this implies $\lVert a \otimes b \rVert = 0$ and therefore $x = 0$, which is a contradiction.
\end{proof}

Using the results from \cite{ORS:qd_elem_amen} and \cite{TWW:quasidiagonality}, we can now adapt the method used in Prop.~\ref{prop-algebras} to prove the following:

\begin{theorem}\label{thm:Bernoulli}
	Let $D$ be a unital separable quasidiagonal $C^*$-algebra and let $G$ be a countable discrete amenable group. Then the crossed product  $(\bigotimes_G D)\rtimes G$
is quasidiagonal.
\end{theorem}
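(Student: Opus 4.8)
The plan is to adapt the construction of Proposition~\ref{prop-algebras}, replacing the connectivity input by a quasidiagonal one and landing in a matricial Bernoulli crossed product whose quasidiagonality is supplied by \cite{ORS:qd_elem_amen} and \cite{TWW:quasidiagonality}. Throughout I write $A=\bigotimes_G D$ with the Bernoulli action $\alpha\colon G\to\Aut{A}$; since $G$ is amenable the reduced and full crossed products coincide, so I may work with $A\rtimes_r G$, which is separable so that Voiculescu's criterion applies.

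First I would invoke Voiculescu's characterisation recalled before the theorem: as $D$ is unital, separable and quasidiagonal, there is a ucp discrete asymptotic morphism $\{\varphi_n\colon D\to M_{k(n)}(\C)\}_n$ with $\limsup_n\lVert\varphi_n(d)\rVert=\lVert d\rVert$ for all $d\in D$, and after passing to a subsequence (using separability of $D$) I may assume $\lim_n\lVert\varphi_n(d)\rVert=\lVert d\rVert$. Tensoring over $G$ yields ucp maps $\Phi_n=\bigotimes_G\varphi_n\colon A\to A_n:=\bigotimes_G M_{k(n)}(\C)$, defined on the inductive limit and ucp by Stinespring exactly as in Proposition~\ref{prop-algebras}. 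Iterating Lemma~\ref{lem:tensor_dah} over the finite subsets of $G$ (applied along arbitrary subsequences to upgrade $\limsup$ to $\lim$) gives $\lim_n\lVert\Phi_n(x)\rVert=\lVert x\rVert$ on the dense subalgebra $\bigcup_F\bigotimes_F D$, hence $\limsup_n\lVert\Phi_n(x)\rVert=\lVert x\rVert$ for all $x\in A$ by contractivity. Moreover $\{\Phi_n\}_n$ is asymptotically multiplicative and $G$-equivariant, intertwining $\alpha$ with the Bernoulli action $\sigma$ on $A_n$.

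Next, as in Proposition~\ref{prop-algebras}, equivariance and \cite[Thm.~3.5(d)]{paper:Raeburn} produce ucp maps $\widetilde\Phi_n\colon A\rtimes_r G\to A_n\rtimes_r G$ with $\widetilde\Phi_n(au_g)=\Phi_n(a)v_g$, and the computation \eqref{eqn:asy_mult} shows that $\{\widetilde\Phi_n\}_n$ is asymptotically multiplicative. The main point is to prove that it is asymptotically isometric, i.e.\ $\limsup_n\lVert\widetilde\Phi_n(x)\rVert=\lVert x\rVert$; here the argument of Proposition~\ref{prop-algebras} does not apply directly, since $\widetilde\Phi_n$ is only almost multiplicative rather than an honest equivariant embedding, so Pedersen's theorem on reduced crossed products is unavailable. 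Instead I would use the canonical faithful conditional expectations $E\colon A\rtimes_r G\to A$ and $E_n\colon A_n\rtimes_r G\to A_n$. Evaluating on elements $au_g$ gives $E_n\circ\widetilde\Phi_n=\Phi_n\circ E$. Passing to the induced $*$-homomorphism $\eta\colon A\rtimes_r G\to\prod_n(A_n\rtimes_r G)/\bigoplus_n(A_n\rtimes_r G)$ and to the descended (ucp) expectation $E_\infty$ on the target sequence algebra, this identity becomes $E_\infty\circ\eta=\eta_A\circ E$, where $\eta_A$ is the induced map on $A$, which is injective by the previous paragraph. If $\eta(x)=0$ then $\eta_A(E(x^*x))=E_\infty(\eta(x^*x))=0$, hence $E(x^*x)=0$ by injectivity of $\eta_A$, hence $x=0$ by faithfulness of $E$. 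Thus $\eta$ is injective, which is precisely the asserted asymptotic isometry.

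Finally, $A_n=\bigotimes_G M_{k(n)}(\C)$ is a UHF algebra carrying the noncommutative Bernoulli shift, and the results of \cite{ORS:qd_elem_amen} and \cite{TWW:quasidiagonality} guarantee that $A_n\rtimes_r G$ is quasidiagonal. Given a finite set $F\subset A\rtimes_r G$ and $\ep>0$, I would choose $n$ so that $\widetilde\Phi_n$ is $(F,\ep/2)$-multiplicative with $\lVert\widetilde\Phi_n(x)\rVert\ge\lVert x\rVert-\ep/2$ on $F$, and then apply Voiculescu's criterion inside $A_n\rtimes_r G$ to obtain a cpc map $\theta\colon A_n\rtimes_r G\to M_N(\C)$ that is almost multiplicative and almost isometric on the finite set $\widetilde\Phi_n(F)$. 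The composite $\theta\circ\widetilde\Phi_n\colon A\rtimes_r G\to M_N(\C)$ is then $(F,\ep)$-multiplicative and $(F,\ep)$-isometric, so $A\rtimes_r G$ is quasidiagonal. The hard part is the asymptotic isometry of $\widetilde\Phi_n$ through the crossed product, resolved by the conditional-expectation identity above; the rest is a routine adaptation of Proposition~\ref{prop-algebras} together with the cited quasidiagonality of matricial Bernoulli shifts.
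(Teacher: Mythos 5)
Your proposal is correct and takes essentially the same route as the paper's own proof: build $\Phi_n=\bigotimes_G\varphi_n$ from Voiculescu's criterion for $D$, use Lemma~\ref{lem:tensor_dah} inductively for asymptotic injectivity, extend equivariantly to $\widetilde{\Phi}_n\colon A\rtimes G\to R_n\rtimes G$ via \cite[Thm.~3.5~(d)]{paper:Raeburn}, quote \cite{ORS:qd_elem_amen} and \cite{TWW:quasidiagonality} for quasidiagonality of $R_n\rtimes G$, and establish asymptotic isometry through the crossed product via the identity $E_n\circ\widetilde{\Phi}_n=\Phi_n\circ E$ for the faithful conditional expectations. Your sequence-algebra phrasing of this last step is just a cosmetic reformulation of the paper's $\limsup$ argument on a positive element, and your explicit final composition with Voiculescu's criterion for $R_n\rtimes G$ is the step the paper leaves implicit.
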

Here we work with minimal tensor products and $G$ acts via noncommutative Bernoulli shifts.
Rosenberg has shown that quasidiagonality of the reduced group $C^*$-algebra $C^*_r(G)$ of a countable discrete group $G$ implies
the amenability of $G$ \cite{Had:QD_amenable}. Hence, since both $D$ and $C^*(G)$ are subalgebras of $(\bigotimes_G D)\rtimes_r G$, the conditions that
$D$ is quasidiagonal and $G$ is amenable are certainly necessary.
\begin{proof}
	Let $A = \bigotimes_G D$. By Voiculescu's characterisation of quasidiagonality it suffices to find a discrete ucp asymptotic morphism $\{\Psi_n \colon A \rtimes G \to B_n\}_n$ with $\limsup_{n \to \infty} \lVert \Psi_n(x) \rVert = \lVert x \rVert$ for all $x \in A \rtimes G$, such that all $C^*$-algebras $B_n$ are quasidiagonal.

	By quasidiagonality of $D$ there is a ucp discrete asymptotic morphism $\{\varphi_n \colon D \to M_{k(n)}(\C)\}_n$ with $\lim_{n \to \infty} \lVert \varphi_n(d) \rVert = \lVert d \rVert$ for all $d \in D$. Define
	\[
		\Phi_n \colon \bigotimes_G D \to \bigotimes_G M_{k(n)}(\C)
	\]
	by $\Phi_n = \bigotimes_G \varphi_n$ and let $R_n = \bigotimes_G M_{k(n)}(\C)$. As in the proof of Prop.~\ref{prop-algebras} it follows that $\left\{\Phi_n\right\}_{n \in \N}$ is a ucp discrete asymptotic morphism. Next we show that $\limsup_{n \to \infty} \lVert \Phi_n(a) \rVert = \lVert a \rVert$ for all $a \in A$. It suffices to verify this for elements $a \in \bigotimes_F D$ for any finite set $F \subset G$. This is proved by induction on  the cardinality of $F$, using  Lemma~\ref{lem:tensor_dah}.
	
	Since each $\Phi_n$ is cpc and $G$-equivariant (where $G$ acts on $R_n$ via Bernoulli shifts), each $\Phi_n$ extends to a ucp map $\tilde{\Phi}_n \colon A \rtimes G \to R_n \rtimes G$ by \cite[Thm.~3.5~(d)]{paper:Raeburn}. The same calculation as in (\ref{eqn:asy_mult}) shows that $\{\tilde{\Phi}_n\}_{n \in \N}$ is a discrete asymptotic morphism. By  \cite{ORS:qd_elem_amen} and \cite{TWW:quasidiagonality} the algebra $B_n = R_n \rtimes G$ is quasidiagonal.
	
	To conclude the proof it remains to be shown that
	\begin{equation} \label{eqn:asy_inj}
		\limsup_{n \to \infty} \lVert \tilde{\Phi}_n(y) \rVert = \lVert y \rVert \qquad \forall y \in A \rtimes G\ .
	\end{equation}
	To this purpose consider the commutative diagram
	\[
		\xymatrix{
			A \rtimes G \ar[d]_-{E} \ar[r]^-{\tilde{\Phi}_n} & R_n \rtimes G \ar[d]^-{E_n} \\
			A \ar[r]_-{\Phi_n} & R_n
		}
	\]
	where $E$ and $E_n$ are the canonical faithful conditional expectations (see \cite[Prop.~4.1.9]{Brown-Ozawa}).
	Suppose (\ref{eqn:asy_inj}) is false. Then there exists an $x \in A \rtimes G$ with $x \geq 0$, $x \neq 0$ such that $\limsup_{n \to \infty} \lVert \tilde{\Phi}_n(x) \rVert = 0$. But since $E_n(\tilde{\Phi}_n(x)) = \Phi_n(E(x))$ and $E_n$ are contractive, we obtain
	\[
		0 = \limsup_{n \to \infty} \lVert E_n(\tilde{\Phi}_n(x)) \rVert = \limsup_{n \to \infty} \lVert \Phi_n(E(x)) \rVert = \lVert E(x) \rVert\ ,
	\]
	which implies $x = 0$ by faithfulness of $E$ and yields a contradiction.
\end{proof}

\emph{Acknowledgements.} We thank the referee for a number of suggestions that improved the exposition and for pointing out that Lemma~\ref{lem:ses} and its proof holds without assuming amenability of the involved groups.  Part of this work was completed during the research program ``Classification of Operator Algebras: Complexity, Rigidity, and Dynamics'' at the Institut Mittag-Leffler. The authors would like to thank the organisers and the staff at the IML for the hospitality and for the inspiring and productive atmosphere.

\bibliographystyle{abbrv}


\end{document}